\newtheorem{theorem}{Theorem}[section]
\newtheorem{lemma}[theorem]{Lemma}
\newtheorem{corollary}[theorem]{Corollary}
\newtheorem{definition}{Definition}[section]
\newtheorem{proposition}{Proposition}[section]
\newtheorem{observation}{Observation}
\title{Distance Magic Labeling of Generalised Mycielskian Graphs}
\author{Ravindra Pawar\footnote{p20200020@goa.bits-pilani.ac.in} \;  and  \; Tarkehswar Singh\footnote{tksingh@goa.bits-pilani.ac.in}\\
	Department of Mathematics, BITS Pilani K K Birla Goa Campus, Goa, India.}
\date{}
\begin{document}
	
\maketitle{}
\begin{abstract}
A graph $G = (V,E)$ is said to be a distance magic graph if there is a bijection $f: V(G) \to \{1,2,\dots, |V(G)|\}$ 
such that the vertex weight $ w(u) = \sum_{v \in N(u)} f(v) = k$ is constant and independent of $u$,
where $N(u)$ is the open neighborhood of the vertex $u$.
The constant $k$ is called a \textit{magic constant}, the function $f$ is called a \textit{distance magic labeling of the graph $G$}
and the graph that admits such labeling is called a \textit{distance magic graph}.  In this paper, we present some results on distance magic labeling of graphs obtained by generalised Mycielskian construction.\\
\medskip
\textbf{Keywords:} Distance Magic Graph, Distance Magic Labeling, Mycielskian Construction.\\ 
		
\noindent \textbf{AMS Subject Classification 2021: 05C 78.}
\end{abstract}

\section{Introduction}
Throughout this paper, by a graph $G = (V, E)$, we mean a finite undirected simple graph with vertex set $V(G)$ and edge set $E(G)$. For graph-theoretic terminology and notation, we refer to \cite{west}.\\
\smallskip
 A \textit{labeling} of a graph is any function that assigns elements of a graph (vertices or edges or both) to the set of numbers (positive integers or elements of groups, etc.). In particular, if we have a bijection $f:V(G) \to \{1, 2,\dots, |V(G)|\}$ then $f$ is called a \textit{vertex labeling}. 
The \textit{neighborhood} of a vertex $x$ in $G$ is the set of all vertices adjacent to $x$ and is denoted by $N_G(x)$. The \textit{degree} of vertex $v$ in $G$, denoted by $deg_{G}(v)$ is the $|N_G(v)|$. \\
\smallskip
When a graph $G$ is clear from the context, we will simply write $N(x)$ and $deg(x)$ for the neighborhood and degree of a vertex $x$, respectively. The \textit{weight} of a vertex $v$, denoted by $w(v)$ is defined as $w(v)= \sum_{u \in N(v)}  f(u)$. 
If $f$ is vertex labeling such that $w(v) = k$, for all $v \in V(G)$, then $k$ is called a \textit{magic constant} and the labeling $f$ is called a \textit{distance magic labeling}. A graph that admits such labeling is called a \textit{distance magic graph}. For more details, see \cite{dm_survey_arumugam, dm_survey_gallian, sigma_jinnah, dm_miller, rao, vilfredt}.\\
\smallskip
The characterization of distance magic graphs remains a persistent challenge within the realm of graph labeling problems, and as of now, no comprehensive characterization has been established. Researchers have been actively investigating this problem, either by examining specific graph families or by constructing graphs with distinct graph-theoretic properties conducive to distance magic labeling. Additionally, there is a prevailing belief that among a collection of nonisomorphic graphs on $n$ vertices, only a small fraction exhibits distance magic properties \cite{dm_algo_fuad}. This paper contributes to the discourse by conducting a study on the distance magic labeling of graphs derived from the generalized Mycielskian construction.\\
\smallskip
The Mycielskian $\mu(G)$ of a graph $G = (V, E)$ is the graph with the vertex set $V(\mu(G)) = \{(x,0), (x,1): x \in V(G)\} \cup \{u\}$ and the edge set $E(\mu(G)) = \{(x,0)(y,0), (x,0)(y,1) : xy \in E(G)\} \cup \{(x,1)u : x \in V(G)\}$\cite{mycielski}. Using this construction, Mycielski proved the existence of triangle-free graphs with arbitrarily large chromatic numbers. In \cite{ravi_myc}, authors have studied the distance magic labeling of Mycielskian of some families of graphs, including trees, cycles, wheels, and complete bipartite graphs. In particular, they observed that, $\mu(G)$ admits distance magic labeling independent of the graph $G$. Furthermore, the authors established that if $G$ is $r$-regular such that $\mu(G)$ is distance magic, then $r \le 3$. They have given the complete characterisation of connected $2$-regular graphs $G$ for which $\mu(G)$ is distance magic. In particular, for a given graph $G$, they have studied the problem for the graphs $G, \mu(G), \mu^{2}(G), \dots $, obtained through the iterative application of Mycielskian construction on $G$.\\
\smallskip
The Mycielski's construction was generalised independently by Stiebitz and Ngoc in their dissertations in the following way: Given a graph $G=(V,E)$ and an integer $m \ge 0$, \textit{Generalised Mycielskian graph} $\mu_m(G)$ as graph with a vertex set $\{(x,i): 0 \leq i \leq m, x\in V(G)\} \cup \{u\}$, where there is an edge $(x,0)(y,0)$ and $(x,i)(y,i+1)$ whenever there is an edge $xy \in E$, and an edge $(x,m)u$ for all $x \in V$. Observe that, for a connected graph $G$ with $|V(G)| = n$, $\mu_m(G)$ is also connected with $|V(\mu_m(G))| = mn + n + 1$. Note that $\mu_0(G) \cong G$ and $\mu_1(G) \cong \mu(G)$.
The construction is illustrated in Figure \ref{fig:m3p3} for $\mu_2(P_3)$.
\smallskip
\begin{figure}[ht]
\centering
\includegraphics[scale = 0.5]{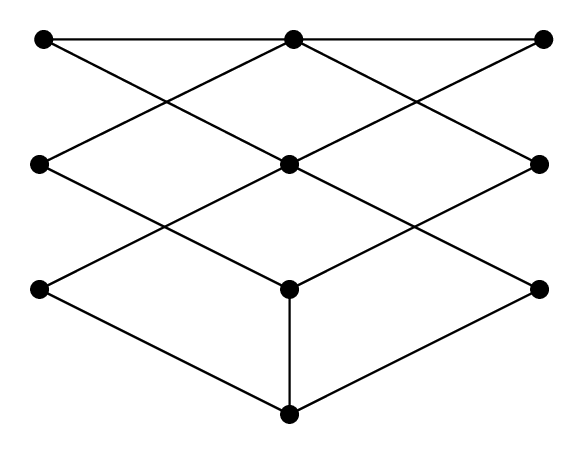}
\caption{$\mu_2(P_3)$.}
\label{fig:m3p3}
\end{figure}

In this paper, we discuss the existence of distance magic labeling of the generalised Mycielskian graph $\mu_m(G)$ for some families of graphs. Throughout the paper, we assume $m \geq 2$.\\
\smallskip
We need the following results for our further investigation.
	
\begin{theorem} \label{VJ} \cite{sigma_jinnah, vilfredt} 
A graph $G$ is not distance magic if there are vertices $x$ and $y$ in $G$ such that $|N(x) \triangle N(y)| =1$ or $2$.
\end{theorem}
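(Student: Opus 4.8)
The plan is to argue by contradiction. Suppose $G$ is distance magic via a bijection $f \colon V(G) \to \{1, \dots, n\}$ with constant weight $k$, so that $w(x) = w(y) = k$ for any two vertices $x$ and $y$. The labels of the common neighbours cancel when we subtract the two weights, so I would first record the identity
\[
w(x) - w(y) = \sum_{v \in N(x) \setminus N(y)} f(v) - \sum_{v \in N(y) \setminus N(x)} f(v) = 0,
\]
which must hold for every distance magic labeling. Since $N(x) \setminus N(y)$ and $N(y) \setminus N(x)$ are disjoint, we have $|N(x) \triangle N(y)| = |N(x) \setminus N(y)| + |N(y) \setminus N(x)|$, so the hypothesis $|N(x) \triangle N(y)| \in \{1, 2\}$ leaves only a handful of ways to distribute the unmatched neighbours between the two one-sided differences.

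The heart of the argument is then a short case analysis driven by this identity. If $|N(x) \triangle N(y)| = 1$, one of the one-sided differences is empty and the other is a single vertex $z$, whence the identity reads $\pm f(z) = 0$; but $f(z) \ge 1$, a contradiction. If $|N(x) \triangle N(y)| = 2$, either both unmatched vertices lie on the same side, giving $\pm\bigl(f(z_1) + f(z_2)\bigr) = 0$ with a strictly positive sum, which is impossible; or exactly one unmatched vertex $a$ lies in $N(x) \setminus N(y)$ and one $b$ lies in $N(y) \setminus N(x)$, forcing $f(a) = f(b)$, which is impossible since $a \ne b$ and $f$ is injective. In every case the constant-weight hypothesis fails, so $G$ cannot be distance magic.

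A point worth flagging is that $x$ and $y$ themselves may appear among the unmatched neighbours: if $x$ and $y$ are adjacent then $y \in N(x) \setminus N(y)$ and $x \in N(y) \setminus N(x)$, because the neighbourhoods are open and neither vertex lies in its own neighbourhood. I would note that this causes no difficulty, since the final contradiction only uses that $f$ is injective with positive values, and $f(x) = f(y)$ remains impossible for distinct $x, y$. There is no substantive obstacle in this proof; its entire content is the cancellation identity together with the fact that a bijection into the positive integers cannot make a nonempty signed sum of this shape vanish. The only care needed is to enumerate the cases exhaustively and to handle the adjacent pair correctly.
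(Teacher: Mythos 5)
The paper states this result only as a citation to the references it lists and supplies no proof of its own, so there is no internal argument to compare against; your proof is the standard one for this fact and is correct. Subtracting $w(x)=w(y)=k$ so that common neighbours cancel, then observing that the one or two leftover labels cannot produce a vanishing signed sum because $f$ is injective and positive-valued, is exactly the argument given in the original sources, and your remark that an adjacent pair $x,y$ merely places $x$ and $y$ themselves on opposite sides of the symmetric difference (so the contradiction $f(x)=f(y)$ still goes through) correctly disposes of the only delicate case.
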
 
	
\begin{theorem}\cite{dm_miller, rao, vilfredt} \label{weights}
Let $f$ be a distance magic labeling of a graph $G$ with the vertex set $V$. Then, the sum of the weights of all the vertices is given by:
\begin{eqnarray}
\sum_{v \in V(G)} w(v) = \sum_{v \in V(G)} deg(v) f(v) = kn, \nonumber
\end{eqnarray}
where $k$ is magic constant and $n$ is the number of vertices.
\end{theorem}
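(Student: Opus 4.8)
The plan is to establish the two equalities separately: the first by a standard double-counting (summation-interchange) argument, and the second directly from the magic hypothesis.

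First I would expand the total weight using the definition $w(v) = \sum_{u \in N(v)} f(u)$, obtaining
\[
\sum_{v \in V(G)} w(v) = \sum_{v \in V(G)} \sum_{u \in N(v)} f(u).
\]
The crucial observation is that the right-hand side is a sum over ordered pairs $(v,u)$ with $u \in N(v)$. I would then interchange the order of summation: for a fixed vertex $u$, the label $f(u)$ is counted once for every vertex $v$ satisfying $u \in N(v)$. Since $G$ is undirected and simple, $u \in N(v)$ is equivalent to $v \in N(u)$, so the number of such $v$ is precisely $|N(u)| = deg(u)$. Hence $f(u)$ contributes $deg(u)\,f(u)$ to the total, and summing over all $u$ gives $\sum_{v \in V(G)} deg(v)\,f(v)$, which is the first equality.

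For the second equality I would invoke the hypothesis that $f$ is a distance magic labeling, so that $w(v) = k$ for every $v \in V(G)$. Summing this constant over the $n = |V(G)|$ vertices immediately yields $\sum_{v \in V(G)} w(v) = kn$. Chaining the two equalities completes the proof.

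This is essentially a bookkeeping identity, so I do not anticipate any serious obstacle; the only point requiring care is the justification of the summation swap, which rests entirely on the symmetry of the adjacency relation in an undirected graph. I would state this symmetry explicitly, since it is exactly what guarantees that each label $f(u)$ is tallied precisely $deg(u)$ times rather than via some asymmetric directed count.
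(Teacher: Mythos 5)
Your proof is correct: the summation interchange justified by the symmetry of adjacency gives the first equality, and summing the constant weight $k$ over all $n$ vertices gives the second. The paper states this result only as a cited theorem from the literature and supplies no proof of its own, but your double-counting argument is the standard one behind it and there is nothing to add.
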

	
\begin{corollary} \cite{sigma_jinnah, dm_miller, vilfredt}
No odd regular graph is distance magic.
\end{corollary}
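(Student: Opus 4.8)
The plan is to derive a contradiction by combining the weight-sum identity from Theorem \ref{weights} with an elementary parity constraint on regular graphs. Suppose, for contradiction, that $G$ is an $r$-regular distance magic graph with $r$ odd, on $n$ vertices, and let $f$ be a distance magic labeling with magic constant $k$.

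First I would specialise Theorem \ref{weights} to the regular case. Since $\deg(v) = r$ for every vertex, the identity $\sum_{v} \deg(v) f(v) = kn$ becomes $r \sum_{v} f(v) = kn$. Because $f$ is a bijection onto $\{1, 2, \dots, n\}$, the sum of all labels is $\sum_{v} f(v) = n(n+1)/2$. Substituting and cancelling the factor of $n$ yields the clean formula
\begin{equation}
k = \frac{r(n+1)}{2}. \nonumber
\end{equation}

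The crux is then a parity observation. By the handshaking lemma, the number of odd-degree vertices in any graph is even; in an $r$-regular graph with $r$ odd, every one of the $n$ vertices has odd degree, forcing $n$ to be even. Consequently $n+1$ is odd, and with $r$ also odd the product $r(n+1)$ is odd, so $k = r(n+1)/2$ cannot be an integer. This contradicts the fact that $k$ is a sum of positive integer labels, and the contradiction completes the proof.

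I do not expect any serious obstacle here: the argument is essentially a two-line parity computation once the weight-sum identity is invoked. The only point requiring care is justifying that $n$ must be even, but this is immediate from the handshaking lemma applied to an odd-regular graph, so no hard step remains.
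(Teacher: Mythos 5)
Your argument is correct and is exactly the intended derivation: the paper presents this as a corollary of Theorem \ref{weights}, and specialising that identity to an $r$-regular graph gives $k = r(n+1)/2$, after which the handshaking-lemma parity argument (odd $r$ forces even $n$, so $r(n+1)$ is odd and $k$ is not an integer) is the standard way the cited sources conclude. No gaps.
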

	
\begin{theorem}\cite{dm_miller} \label{cycle}
Cycle $C_n$ is distance magic if and only if $n= 4$.
\end{theorem}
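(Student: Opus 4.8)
The plan is to prove the two directions separately, leaning on Theorem \ref{VJ} for the harder ``only if'' direction. First I would dispose of the ``if'' direction by exhibiting an explicit distance magic labelling of $C_4$. Labelling the four vertices cyclically as $v_1, v_2, v_3, v_4$, one notes that opposite vertices share the same open neighbourhood: $N(v_1) = N(v_3) = \{v_2, v_4\}$ and $N(v_2) = N(v_4) = \{v_1, v_3\}$. Hence a bijection $f$ is distance magic precisely when $f(v_1) + f(v_3) = f(v_2) + f(v_4)$, and since $1+2+3+4 = 10$, the common weight must be $k = 5$. Taking $f(v_1) = 1$, $f(v_3) = 4$, $f(v_2) = 2$, $f(v_4) = 3$ gives $w(v_i) = 5$ for every $i$, so $C_4$ is distance magic.

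For the ``only if'' direction I would show that whenever $n \neq 4$ the cycle contains two vertices whose open neighbourhoods differ in exactly two elements, so that Theorem \ref{VJ} immediately forbids a distance magic labelling. Label the cycle cyclically as $v_1, v_2, \ldots, v_n$ with indices read modulo $n$. The natural candidate pair is $v_1$ and $v_3$, which lie at distance two and therefore share the common neighbour $v_2$, their remaining neighbours being $v_n$ and $v_4$. Then $N(v_1) \triangle N(v_3) = \{v_n, v_4\}$, and the key observation is that $v_n = v_4$ happens only when $n = 4$. Thus for every $n \geq 5$ this symmetric difference has size exactly $2$, and Theorem \ref{VJ} applies.

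The one case requiring separate attention is $n = 3$, where $v_1$ and $v_3$ are adjacent rather than at distance two; here I would instead take any two vertices, say $v_1$ and $v_2$, whose neighbourhoods in the triangle are $\{v_2, v_3\}$ and $\{v_1, v_3\}$, so that $N(v_1) \triangle N(v_2) = \{v_1, v_2\}$ again has size $2$. Combining this with the generic argument above and the explicit labelling of $C_4$ yields the claimed equivalence.

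The main obstacle is simply the careful bookkeeping of which vertices coincide for small $n$: one must verify that the proposed pair really produces a symmetric difference of size exactly $1$ or $2$, and not $0$ (which is exactly what happens for $v_1, v_3$ when $n = 4$, and is precisely why that case survives). Once the boundary values $n = 3$ and $n = 4$ are treated explicitly, the argument for $n \geq 5$ is immediate from Theorem \ref{VJ}.
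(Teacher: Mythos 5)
Your proof is correct. Note that the paper states this result as a citation to the literature (\cite{miller}) and gives no proof of its own, so there is nothing internal to compare against; what you have written is a complete, self-contained proof. Your argument is the standard one and is exactly in the spirit of the technique the paper uses elsewhere: the nonexistence direction via Theorem \ref{VJ} applied to the pair $v_1, v_3$ with $N(v_1) \triangle N(v_3) = \{v_n, v_4\}$ is the same computation the authors perform in Corollary \ref{cncorollary} for $M_t(C_n)$, and your bookkeeping at the boundary cases is sound ($n=3$ handled by the adjacent pair $v_1, v_2$ giving symmetric difference $\{v_1, v_2\}$ of size $2$, and $n=4$ correctly identified as the case where $v_n = v_4$ collapses the symmetric difference to the empty set, which is why $C_4$ escapes the obstruction). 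The explicit labelling of $C_4$ with magic constant $5$ is also correct. The only cosmetic remark is that for $n=3$ the generic pair $v_1, v_3$ would in fact also work ($N(v_1) \triangle N(v_3) = \{v_1, v_3\}$ has size $2$), so the separate case is not strictly necessary, though treating it explicitly does no harm.
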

\begin{theorem}\cite{dm_miller} \label{wheel}
The wheel $W_n = C_n + K_1$ is distance magic if and only if $n= 4$.
\end{theorem}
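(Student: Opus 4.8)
The plan is to prove both directions, using Theorem~\ref{VJ} for the forward (``only if'') direction and an explicit labeling for the reverse (``if'') direction. Write the wheel $W_n = C_n + K_1$ as having cycle vertices $v_1, v_2, \dots, v_n$ (indices read modulo $n$) together with a hub $c$ adjacent to every $v_i$, so that $|V(W_n)| = n+1$. The open neighborhoods are then $N(c) = \{v_1, \dots, v_n\}$ and $N(v_i) = \{v_{i-1}, v_{i+1}, c\}$ for each $i$.

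For the ``only if'' direction, I would fix any $n \ge 3$ with $n \ne 4$ and examine the pair of cycle vertices $v_i$ and $v_{i+2}$. Both neighborhoods contain the hub $c$ and the common cycle vertex $v_{i+1}$, while the remaining members are $v_{i-1} \in N(v_i)$ and $v_{i+3} \in N(v_{i+2})$. A short index computation shows that, precisely because $n \nmid 4$, these two leftover vertices are distinct from each other and neither lies in the opposite neighborhood; hence $N(v_i) \cap N(v_{i+2}) = \{v_{i+1}, c\}$ and $|N(v_i) \triangle N(v_{i+2})| = 2$. Theorem~\ref{VJ} then immediately yields that $W_n$ is not distance magic. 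The only place this argument breaks down is $n = 4$, where $v_{i-1} = v_{i+3}$ collapses the symmetric difference to the empty set.

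For the ``if'' direction it remains to produce a distance magic labeling of $W_4$, which has five vertices to be labeled with $\{1,2,3,4,5\}$. Writing $a_i = f(v_i)$, the constancy of the cycle-vertex weights forces $a_{i-1} + a_{i+1} = k - f(c)$ for all $i$, which for $n=4$ reduces to the pairing conditions $a_1 + a_3 = a_2 + a_4$; comparing this with the hub weight $\sum_i a_i = k$ pins down $f(c) = 5$ and forces the cycle labels $\{1,2,3,4\}$ to split into two pairs of equal sum. I would then simply exhibit $f(c)=5$ and $(a_1,a_2,a_3,a_4) = (1,2,4,3)$ and verify $w(v) = 10$ for every vertex.

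The argument is essentially obstruction-free: the symmetric-difference criterion does all the work for the forward direction in one uniform stroke, and the reverse direction is a finite check. The only subtle point to get right is the modular arithmetic establishing that the symmetric difference has size exactly $2$ (and not $1$, $0$, or larger) for every $n \ge 3$ other than $n = 4$; in particular one must confirm that $v_{i+1}$ is genuinely shared and that $v_{i-1}, v_{i+3}$ are genuinely unshared, each of which fails only when $n \mid 2$ or $n \mid 4$.
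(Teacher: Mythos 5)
Your proposal is correct: the symmetric-difference computation for the pair $v_i, v_{i+2}$ is valid for every $n \ge 3$ with $n \ne 4$ (including the degenerate case $n=3$, where the two leftover vertices are $v_i$ and $v_{i+2}$ themselves), and the $W_4$ labeling with $f(c)=5$ and cycle labels paired to sum $5$ does give constant weight $10$. The paper itself imports this theorem from \cite{miller} without proof, so there is nothing to compare against directly, but your argument uses exactly the paper's standard obstruction (Theorem \ref{VJ}) in the same way the paper applies it to $M_t(W_n)$ in Lemma \ref{mtw41}, so it is entirely consistent with the paper's methods.
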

\begin{definition}
A function $g : V(G) \to [0, 1]$ is called a \textit{total dominating function} of the graph $G$ if $\sum_{u \in N(v)}g(u) \ge 1$ for all $v \in V$. The \textit{fractional total domination number} of a graph $G$ is denoted by $\gamma_{ft}$ and is given by 
\begin{equation} \nonumber
\gamma_{ft} = \text{ min} \{|g| : g \text{ is total dominating function of } G\},
\end{equation}
where $|g| = \sum_{v \in V} g(v)$.
\end{definition}
\begin{theorem} \cite{mycielski_gen_par} \label{dom_num}
For any graph $G$ without isolated vertices,
\begin{align*}
\gamma_{ft}(\mu_m(G)) = \displaystyle 
\begin{cases}
\frac{ m }{2} \gamma_{ft}(G) + 2 - \frac{1}{\gamma_{ft}(G)} \text{ if } m \text{ is even}\\
\frac{ m+1}{2} \gamma_{ft}(G) + \frac{1}{\gamma_{ft}(G)} \text{ if } m \text{ is odd}.
\end{cases}
\end{align*}
\end{theorem}
\begin{theorem} \cite{uniquek, o_uniquek} \label{unique_k}
If $G$ is a distance magic graph of order $n$ then the  distance magic constant $k = \frac{n(n + 1)}{2 \gamma_{ft}(G)}$.
\end{theorem}
\section{Main Results}
Let $G = (V,E)$ be a graph. The subgraph induced by the subset $S=\{(x,0) \in V(\mu_m(G): x \in V(G)\}$ of $V(\mu_m(G))$ is isomorphic to $G$. By construction, we have $deg_{\mu_m(G)}(x_i,0) = deg_{\mu_m(G)}(x_i,j)$, for all $1 \le j \le (m-1)$.
\begin{observation} \label{obs1}
Let $G$ be a graph. For $x \in V(G)$, $deg_{\mu_m(G)}(x,i) = deg_{\mu_m(G)}(x,0) = 2~deg_{G}(x)$, for $1 \le i \le (m-1)$ and $deg_{\mu_m(G)}(x,m) = \frac{deg_{\mu_m(G)}(x,0)}{2}+1$.    
\end{observation}
	
\begin{proposition}\label{th2}
$\mu_m(G)$ is $r$-regular if and only if $G \cong K_2$.
\end{proposition}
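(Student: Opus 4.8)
The plan is to read off the distinct degree values appearing in $M_t(G)$ from Proposition \ref{th1} and force them to coincide. Since $t \ge 3$, every original vertex $x$ contributes both a level-$0$ copy $(x,0)$ of degree $2\deg_G(x)$ and a top-level copy $(x,t-1)$ of degree $\deg_G(x)+1$, while the apex vertex $u$ has degree equal to $n = |V(G)|$. Regularity of $M_t(G)$ means all of these values equal a common constant $d$.

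For the forward direction, I would first compare the two copies of a single vertex $x$: if $M_t(G)$ is $d$-regular then $2\deg_G(x) = d = \deg_G(x)+1$, which immediately yields $\deg_G(x) = 1$. As this holds for every $x \in V(G)$, the graph $G$ is $1$-regular. Because we assume throughout that $G$ is connected, a connected $1$-regular graph must be $K_2$, so $G \cong K_2$. One may also cross-check the apex condition for consistency: with $G \cong K_2$ we have $n = 2 = d$, so no contradiction arises.

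For the converse, I would substitute $G \cong K_2$, that is $n = 2$ and $\deg_G(x) = 1$ for both vertices, into the formulas of Proposition \ref{th1}. Then each level vertex $(x,i)$ with $0 \le i \le t-2$ has degree $2\cdot 1 = 2$, each top vertex $(x,t-1)$ has degree $1+1 = 2$, and the apex $u$ has degree $n = 2$. Hence $M_t(K_2)$ is $2$-regular (in fact it is the cycle $C_{2t+1}$), which closes the equivalence.

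I do not anticipate a serious obstacle, since the argument is a direct degree count. The only point needing a little care is ensuring that the two formulas genuinely apply to distinct vertices; this is exactly where the standing hypothesis $t \ge 3$ is used, guaranteeing that a level-$0$ vertex and a top-level vertex coexist so that the equation $2\deg_G(x) = \deg_G(x)+1$ is actually forced rather than vacuous.
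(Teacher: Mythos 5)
Your proof is correct and follows essentially the same route as the paper: both arguments simply equate the degree formulas of Proposition \ref{th1} to force $2\deg_G(x)=\deg_G(x)+1$. The only cosmetic difference is in the last step of the forward direction — the paper uses $\deg(u)=n$ to get $n=r=2$ and then rules out $\overline{K_2}$ by inspection, whereas you conclude that $G$ is $1$-regular and invoke the standing connectedness assumption; both are fine (and your parenthetical that the coexistence of the two copies already holds for $t\ge 2$, not just $t\ge 3$, does not affect anything).
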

\begin{proof}
Let $G$ be a graph on $n$ vertices such that $\mu_m(G)$ is $r$-regular. Therefore,
\begin{equation}\label{eq:th21}
deg(u) = deg(x,i) = r \text{ for } i = 0,1,2, \dots, m, \forall x \in V(G).  
\end{equation}
But, $deg(u) = n$. Hence, 
\begin{equation}\label{eq:th22}
r = n.
\end{equation}
By Observation \ref{obs1}, we have $deg(x,1) = \frac{deg(x,0)}{2}+1$. Using this in Equation (\ref{eq:th21}) we get $r = 2$. From Equation (\ref{eq:th22}), we get $n = r =2$. This means $G$ is a graph on $2$ vertices such that $\mu_{m}(G)$ is $2$-regular. Now on $2$ vertices, only two graphs are possible: $K_2$ or its complement $\overline{K_2}$. For $x \in V(\overline{K_2})$, $deg_{\mu_m(\overline{K_2})}(x,0) = 0$ and $deg_{\mu_m(\overline{K_2})}(u) = 2$. Therefore, $\mu_m(\overline{K_2})$ is not regular. Hence, $G$ can not be isomorphic to $\overline{K_2}$. Therefore, $G$ must be isomorphic to $K_2$. Conversely if $G \cong K_2$, then $\mu_m(K_2) \cong C_{2r+1}$ which is $2$-regular. This completes the proof.
\end{proof}
Now we provide the sufficient conditions on a graph $G$, for the non-existence of distance magic labeling of its generalised Mycielskian graph $\mu_m(G)$ for $m \geq 2$.
\begin{lemma} \label{deg1}
If the minimum degree of a graph $G$ is $1$ then $\mu_m(G)$ is not distance magic.
\end{lemma}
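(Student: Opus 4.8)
The plan is to reduce everything to Theorem \ref{VJ}: to show $M_t(G)$ is not distance magic it suffices to exhibit two vertices whose open neighborhoods have symmetric difference of size $1$ or $2$. Since $\delta(G) = 1$, I would first fix a vertex $x \in V(G)$ with $\deg_G(x) = 1$ and let $w$ denote its unique neighbor, so that $N_G(x) = \{w\}$. The natural candidate pair to test is then the two lowest copies $(x,0)$ and $(x,1)$ of this pendant vertex.

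Next I would read off their neighborhoods directly from the description established in the proof of Proposition \ref{th1}. Because $N_G(x) = \{w\}$, the base copy satisfies $N_{M_t(G)}(x,0) = \{(w,0),(w,1)\}$. Using the hypothesis $t \ge 3$, the index $1$ falls in the interior range $1 \le i \le t-2$, so $(x,1)$ is an ordinary interlayer vertex with $N_{M_t(G)}(x,1) = \{(w,0),(w,2)\}$, rather than having the degenerate top-layer form involving $u$. Both neighborhoods have exactly two elements, and they share the common vertex $(w,0)$.

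Finally I would compute the symmetric difference: after cancelling $(w,0)$, one is left with $N(x,0) \triangle N(x,1) = \{(w,1),(w,2)\}$, which has size $2$. Theorem \ref{VJ} then immediately gives that $M_t(G)$ is not distance magic, completing the argument.

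There is no deep obstacle here; the entire content is the layer bookkeeping of the generalised Mycielskian construction. The one point that genuinely requires care is verifying that $t \ge 3$ is exactly what places $(x,1)$ in the interior range $1 \le i \le t-2$, guaranteeing $N_{M_t(G)}(x,1) = \{(w,0),(w,2)\}$; for $t = 2$ this copy would instead be adjacent to $u$ and the computation would differ. Once the right pair of vertices is identified, the symmetric-difference calculation is routine and Theorem \ref{VJ} does the rest.
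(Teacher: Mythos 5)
Your proof is correct and follows essentially the same route as the paper: both pick a pendant vertex $x$ with unique neighbour $w$, compare $N_{M_t(G)}(x,0)=\{(w,0),(w,1)\}$ with $N_{M_t(G)}(x,1)=\{(w,0),(w,2)\}$, and invoke Theorem \ref{VJ} on a symmetric difference of size $2$. (Your explicit attention to why $t\ge 3$ places $(x,1)$ in the interior range is a point the paper leaves implicit, and your symmetric difference $\{(w,1),(w,2)\}$ is in fact written more accurately than the paper's.)
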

\begin{proof}
Let $x_1$ be a vertex in $G$ such that $deg_G(x_1) = 1$. Hence, there is a unique vertex $x_2 \in N_G(x_1)$. Then $N_{\mu_m(G)}(x_1, 0) = \{(x_2, 0), (x_2, 1)\}$ and $N_{\mu_m(G)}(x_1, 1) = \{(x_2, 0), (x_2, 2)\}$. This gives $|N_{\mu(G)}(x_1,0) \triangle N_{\mu(G)}(x_1,1)| = |\{(x_2,0), (x_2, 2)\}| = 2$. Hence, by Theorem \ref{VJ}, $\mu_m(G)$ is not distance magic. 
\end{proof}
	
This lemma immediately gives the non-existence of magic labeling of Mycielskian of a major family of graphs.
	
\begin{corollary} \label{cordeg1}
If $T$ is a tree, then $\mu_m(T)$ is not distance magic. 
\end{corollary}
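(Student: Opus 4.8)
The plan is to reduce this corollary directly to Lemma~\ref{deg1}, since that lemma already establishes non-distance-magicness for any graph carrying a degree-one vertex. The only thing I need to supply is the structural fact that every tree furnishes such a vertex, after which the conclusion is immediate.

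First I would recall the elementary fact from the theory of trees: any tree $T$ on $n \ge 2$ vertices, being connected and acyclic, must contain at least two leaves, i.e.\ vertices of degree $1$. A clean way to see this is to take a path of maximum length in $T$; neither endpoint can have a neighbour outside the path (such a neighbour would either extend the path, contradicting maximality, or create a cycle, contradicting acyclicity), so each endpoint has degree exactly $1$. Consequently $\delta(T) = 1$.

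With $\delta(T) = 1$ established, the conclusion follows at once: Lemma~\ref{deg1} states precisely that $M_t(G)$ fails to be distance magic whenever $\delta(G) = 1$, so applying it with $G = T$ completes the argument.

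I expect no genuine obstacle here, as all the combinatorial content is carried by Lemma~\ref{deg1} and the corollary is essentially a one-line consequence. The only point meriting a word of care is the degenerate case $T \cong K_1$, where there is no vertex of degree $1$ (indeed $\delta(K_1)=0$) and $M_t(K_1)$ is disconnected; this case should simply be excluded by reading ``tree'' as a tree with at least one edge, consistent with the standing assumption that $G$ is connected with $|V(G)| \ge 2$.
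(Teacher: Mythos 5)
Your proposal is correct and matches the paper's intended argument exactly: the paper presents this corollary as an immediate consequence of Lemma~\ref{deg1}, relying on the standard fact that every tree (with at least one edge) has a leaf, hence $\delta(T)=1$. Your extra care with the degenerate case $T\cong K_1$ is a reasonable refinement but does not change the substance.
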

\begin{lemma} \label{VJ for Mycielski}
If a graph $G$ contains two vertices $x_i$ and $x_j$ such that $|N_G(x_i) \triangle N_G(x_j)| = 2$, then $\mu_m(G)$ is not distance magic.
\end{lemma}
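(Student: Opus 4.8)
The plan is to reduce the statement to Theorem~\ref{VJ}: it suffices to exhibit two vertices of $M_t(G)$ whose open neighbourhoods have symmetric difference of size $1$ or $2$. Since the hypothesis $|N_G(x_i)\triangle N_G(x_j)| = 2$ is a statement about the base graph $G$, the natural candidates are the corresponding copies $(x_i,\ell)$ and $(x_j,\ell)$ lying in a common layer $\ell$ of $M_t(G)$, and I would compute their symmetric difference directly from the neighbourhood descriptions recorded in Proposition~\ref{th1}.

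The crux is the choice of layer. For a middle layer ($1 \le \ell \le t-2$) the vertex $(x,\ell)$ is joined to $(y,\ell-1)$ and $(y,\ell+1)$ for each $G$-neighbour $y$ of $x$, and likewise the bottom vertex $(x,0)$ is joined to $(y,0)$ and $(y,1)$; in either case every neighbour is seen twice, so a short calculation gives
\[
\bigl|N_{M_t(G)}(x_i,\ell)\triangle N_{M_t(G)}(x_j,\ell)\bigr| = 2\,\bigl|N_G(x_i)\triangle N_G(x_j)\bigr| = 4,
\]
which is too large to invoke Theorem~\ref{VJ}. The remedy is to use the top layer $\ell = t-1$: there $N_{M_t(G)}(x,t-1) = \{(y,t-2): y \in N_G(x)\} \cup \{u\}$, so the apex $u$ is a common neighbour of $(x_i,t-1)$ and $(x_j,t-1)$ and cancels in the symmetric difference, while each relevant $y$ contributes only the single vertex $(y,t-2)$ rather than two. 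Hence
\[
N_{M_t(G)}(x_i,t-1)\triangle N_{M_t(G)}(x_j,t-1) = \{(y,t-2): y \in N_G(x_i)\triangle N_G(x_j)\},
\]
a set of exactly $2$ distinct vertices, and Theorem~\ref{VJ} finishes the argument.

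The only genuine obstacle is recognising that the top layer must be used: because layer $t-1$ is the unique layer with a one-directional adjacency to the rest of the graph (there is no layer $t$ above it), it is the only place where the symmetric difference in $M_t(G)$ does not double relative to that in $G$. After that observation the proof is essentially a one-line computation, and I would merely double-check that the two distinct base vertices in $N_G(x_i)\triangle N_G(x_j)$ yield two distinct vertices $(y,t-2)$ with no collisions, so that the count is exactly $2$ and lands in the admissible range of Theorem~\ref{VJ}. As an alternative one could run a weight-sum argument on the same pair: equating the weights of $(x_i,t-1)$ and $(x_j,t-1)$ forces $\sum_{y\in N_G(x_i)\setminus N_G(x_j)} f(y,t-2) = \sum_{y\in N_G(x_j)\setminus N_G(x_i)} f(y,t-2)$, which is impossible for an injective, positive labelling once the two index sets have total size $2$.
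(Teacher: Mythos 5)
Your proof is correct and follows essentially the same route as the paper: both pass to the top layer, note that the apex $u$ is a common neighbour and that each $y \in N_G(x_i)\triangle N_G(x_j)$ contributes the single vertex $(y,t-2)$, conclude $|N_{M_t(G)}(x_i,t-1)\triangle N_{M_t(G)}(x_j,t-1)| = 2$, and invoke Theorem~\ref{VJ}. The added explanation of why the middle layers double the symmetric difference, and the alternative weight-sum argument, are correct but not needed.
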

\begin{proof}
Let $x_i$ and $x_j$ be vertices in $G$ such that $|N_G(x_i) \triangle N_G(x_j)| = 2$. Then $N_{\mu_m(G)}(x_i,m) = \{(y,m-1):y \in N_G(x_i)\} \cup \{u\}$ and $N_{\mu_m(G)}(x_j,m) = \{(y,m-1):y \in N_G(x_j)\} \cup \{u\}$. Therefore,
\begin{align*}
& |N_{\mu_m(G)}(x_i,m) \triangle N_{\mu_m(G)}(x_j,m)| = |N_G(x_i) \triangle N_G(x_j)| = 2
\end{align*}
and by Theorem \ref{VJ}, $\mu_m(G)$ is not distance magic.
\end{proof}
\begin{corollary} \label{kn}
$\mu_m(K_n)$ is not distance magic graph for any $n \ge 1$.
\end{corollary}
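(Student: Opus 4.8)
The plan is to apply Lemma \ref{VJ for Mycielski} directly, so the entire task reduces to exhibiting two vertices of $K_n$ whose neighborhoods differ symmetrically in exactly two elements. First I would recall that in the complete graph $K_n$ every vertex is adjacent to all of the remaining $n-1$ vertices, so that for any vertex $x$ we have $N_{K_n}(x) = V(K_n)\setminus\{x\}$.

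Next, fixing two distinct vertices $x_i$ and $x_j$ (which presupposes $n \geq 2$, the only range in which the statement carries content), I would compute the symmetric difference element by element. Any third vertex $x_k$ with $k \neq i,j$ lies in both neighborhoods and hence cancels; on the other hand $x_i \in N_{K_n}(x_j)$ but $x_i \notin N_{K_n}(x_i)$, and symmetrically $x_j \in N_{K_n}(x_i)$ but $x_j \notin N_{K_n}(x_j)$. Therefore $N_{K_n}(x_i) \triangle N_{K_n}(x_j) = \{x_i, x_j\}$, whose cardinality is exactly $2$. Since $K_n$ thus satisfies the hypothesis of Lemma \ref{VJ for Mycielski}, I would conclude at once that $M_t(K_n)$ is not distance magic.

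I do not expect any genuine obstacle here: the symmetric-difference count is a one-line verification, and Lemma \ref{VJ for Mycielski} supplies all the real content, so the corollary is an immediate specialization. The only point worth a remark is the degenerate range $n \leq 1$, in which $K_n$ carries no pair of vertices to compare; but $K_1$ is edgeless and so the construction carries no distance-magic content there, and restricting to $n \geq 2$ loses nothing.
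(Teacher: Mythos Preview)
Your proposal is correct and follows essentially the same route as the paper: pick two vertices of $K_n$, observe that their neighborhood symmetric difference is $\{x_i,x_j\}$ of size $2$, and invoke Lemma~\ref{VJ for Mycielski}. The only difference is that the paper disposes of the degenerate case $n=1$ explicitly (noting that $M_t(K_1)$ is a disjoint union of $K_2$ with isolated vertices, hence not distance magic), whereas you wave it aside; tightening that one sentence would make your argument match the paper's exactly.
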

\begin{proof}
Let vertices of $K_n$ be labeled as $x_1$, $x_2$,\dots, $x_n$ in anticlockwise manner. If $n = 1$, then $\mu(K_1) \cong K_1 \cup K_2$ is not distance magic. So, we assume $n \ge 2$. Then, $N_{K_n}(x_1) = \{x_2,\ x_3, \dots, x_n\}$ and $N_{K_n}(x_2) = \{x_1,\ x_3,\ x_4, \dots,\ x_n\}$. Hence, $|N_{K_n}(x_1) \triangle N_{K_n}(x_2)| = 2$ and by Lemma \ref{VJ for Mycielski}, $\mu_m(K_n)$ is not distance magic.
\end{proof}
\begin{corollary} \label{cncorollary}
If $n (\ne 4) \ge 3$ then $\mu_m(C_n)$ is not distance magic.
\end{corollary}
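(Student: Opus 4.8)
The plan is to reduce the statement to Lemma \ref{VJ for Mycielski} by exhibiting, for every admissible $n$, a pair of vertices of $C_n$ whose open neighborhoods have symmetric difference of size exactly $2$. Once such a pair is found, Lemma \ref{VJ for Mycielski} immediately yields that $M_t(C_n)$ is not distance magic, with no further computation needed. A single small value of $n$ will resist this approach and must be handled separately.

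First I would treat the generic case $n \ge 5$. Label the vertices of $C_n$ as $x_1, x_2, \dots, x_n$ in cyclic order, so that $N_{C_n}(x_i) = \{x_{i-1}, x_{i+1}\}$ with indices read modulo $n$. I would pick the two vertices $x_1$ and $x_3$, which lie at cyclic distance $2$. Then $N_{C_n}(x_1) = \{x_n, x_2\}$ and $N_{C_n}(x_3) = \{x_2, x_4\}$, so the two neighborhoods share precisely the vertex $x_2$, whence $N_{C_n}(x_1) \triangle N_{C_n}(x_3) = \{x_n, x_4\}$. For $n \ge 5$ the indices $2, 4, n$ are pairwise distinct, so this symmetric difference has size exactly $2$, and Lemma \ref{VJ for Mycielski} applies.

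It remains to dispose of the small value $n = 3$. Here $C_3 \cong K_3$, so $M_t(C_3) \cong M_t(K_3)$ is not distance magic by Corollary \ref{kn}. This completes the case analysis, covering all $n \ge 3$ with $n \ne 4$.

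The only delicate point is that the distance-$2$ pair works precisely because $n \ne 4$: when $n = 4$ one has $x_n = x_4$, the neighborhoods of $x_1$ and $x_3$ coincide, and the symmetric difference collapses to the empty set, which is exactly why $C_4$ (and hence $M_t(C_4)$) must be excluded, consistent with Theorem \ref{cycle}. Thus the main \emph{obstacle} is merely recognising that $n = 4$ is the unique degenerate value for the distance-$2$ pair, and that $n = 3$ must be dispatched by a separate appeal to the complete-graph corollary rather than by the symmetric-difference argument.
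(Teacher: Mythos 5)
Your proposal is correct and follows essentially the same route as the paper: both exhibit a pair of cycle vertices at cyclic distance $2$ (the paper uses $x_1$ and $x_{n-1}$, you use $x_1$ and $x_3$) whose neighborhood symmetric difference has size $2$, and then invoke Lemma \ref{VJ for Mycielski}. Your separate dispatch of $n=3$ via Corollary \ref{kn} is a harmless stylistic variation; in fact the distance-$2$ pair already yields symmetric difference $\{x_1,x_3\}$ of size $2$ when $n=3$, so the case split is not strictly necessary.
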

\begin{proof}
Let $x_1, x_2, \dots, x_n$  be the vertices of the cycle $C_n$ taken anticlockwise manner. Now the $|N(x_1) \triangle N(x_{n-1})| = |\{x_2, x_{n-2}\}| = 2$ and hence by Lemma \ref{VJ for Mycielski}, $\mu_m(C_n)$ is not distance magic for $n (\ne 4) \ge 3$.
\end{proof}
We know that $\gamma_{ft}(C_n) = \frac{n}{2}$. By Theorem \ref{dom_num},
\begin{equation*}
\gamma_{ft}(\mu_m(C_n) = 
\begin{cases}
\frac{mn^2+8n-8}{4n} \text{ if } m \text{ is even}\\
\frac{n^2(m+1) + 8}{4n} \text{ if } m \text{ is odd}
\end{cases}
\end{equation*}
and by Theorem \ref{unique_k}, if $\mu_m(C_n)$ is distance magic then its magic constant is given by
\begin{equation*}
k = 
\begin{cases}
\frac{2n(mn + n + 1)(mn + n + 2)}{mn^2 + 8n - 8} \text{ if } m \text{ is even}\\
\frac{2n(mn+n+1)(mn+n+2)}{n^2(m+1) + 8} \text{ if } m \text{ is odd}.
\end{cases}
\end{equation*}
Observe that $k = 8m + 10$ for $n = 4$ and for any $m \ge 1$, and it need not always be an integer for other values of $m$ and $n$. Hence, it is not possible to conclude that $\mu_{m}(G)$ is distance magic by using Theorem \ref{unique_k}. Now, we present detailed proof of the existence of distance magic labeling of generalised Mycielskian of cycles.
\begin{theorem}\label{cn}
The generalized Myscielskian $\mu_m(C_n)$ is distance magic if and only if $n = 4$.
\end{theorem}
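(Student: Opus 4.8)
The plan is to handle the two directions separately. The ``only if'' direction needs no new work: a cycle has at least three vertices, and Corollary \ref{cncorollary} already shows that $M_t(C_n)$ fails to be distance magic for every $n \ge 3$ with $n \ne 4$. Hence the real content of the theorem is the converse, namely exhibiting an explicit distance magic labeling of $M_t(C_4)$ for every $t \ge 3$.

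For the construction I would exploit the fact that $C_4$ is the complete bipartite graph with parts $A = \{x_1, x_3\}$ and $B = \{x_2, x_4\}$, so that $N_{C_4}(x_1) = N_{C_4}(x_3) = B$ and $N_{C_4}(x_2) = N_{C_4}(x_4) = A$. Writing the labels level by level as $a_i = f(x_1,i)$, $b_i = f(x_2,i)$, $c_i = f(x_3,i)$, $d_i = f(x_4,i)$ and setting $S_i^A = a_i + c_i$, $S_i^B = b_i + d_i$, the adjacency structure of $M_t(G)$ (see Proposition \ref{th1}) shows that every vertex weight depends only on these level--part sums. Concretely, an interior part-$A$ vertex at level $i$ (with $1 \le i \le t-2$) has weight $S_{i-1}^B + S_{i+1}^B$, a level-$0$ part-$A$ vertex has weight $S_0^B + S_1^B$, a top-layer vertex $(x_j,t-1)$ has weight $S_{t-2}^B + f(u)$ or $S_{t-2}^A + f(u)$, and the apex has weight $S_{t-1}^A + S_{t-1}^B$, with the symmetric statements obtained by swapping $A \leftrightarrow B$. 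This reduces the whole system of weight equations to conditions on the numbers $S_i^A, S_i^B, f(u)$, and it automatically equalizes the weights of the twin pairs $(x_1,i),(x_3,i)$ and $(x_2,i),(x_4,i)$.

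Reading off a uniform solution of this reduced system, I would take $f(u) = 4t+1$ and force every part sum to be equal, $S_i^A = S_i^B = 4t+1$ for all $i$; computing $\sum_v \deg(v)f(v)$ via Theorem \ref{weights} then pins the magic constant to $k = 2(4t+1) = 8t+2$. A clean closed form realizing this is
\begin{align*}
f(x_1,i) &= 2i+1, & f(x_2,i) &= 2i+2, \\
f(x_3,i) &= 4t-2i, & f(x_4,i) &= 4t-1-2i,
\end{align*}
for $0 \le i \le t-1$, together with $f(u)=4t+1$. A short check shows that the $a_i$ range over the odd numbers $1,\dots,2t-1$, the $b_i$ over the even numbers $2,\dots,2t$, the $d_i$ over the odd numbers $2t+1,\dots,4t-1$, and the $c_i$ over the even numbers $2t+2,\dots,4t$, so $f$ is a bijection onto $\{1,\dots,4t+1\}$; moreover $a_i + c_i = b_i + d_i = 4t+1$ for each $i$, so all part sums equal $4t+1$ as required.

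The only place where care is needed---and what I expect to be the main obstacle---is checking that this single uniform choice satisfies the weight equations for all three structurally different vertex classes simultaneously, since $M_t(C_4)$ is not regular: the interior vertices and $u$ have degree $4$, while each top-layer vertex $(x_j,t-1)$ has degree $3$. With all part sums equal to $4t+1 = k/2$, each interior weight is $\tfrac{k}{2}+\tfrac{k}{2}=k$, each top-layer weight is $S_{t-2}^A + f(u)$ or $S_{t-2}^B + f(u)$, both equal to $\tfrac{k}{2}+\tfrac{k}{2}=k$, and the apex weight is $S_{t-1}^A + S_{t-1}^B = \tfrac{k}{2}+\tfrac{k}{2}=k$; thus every vertex attains $k = 8t+2$. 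This exhibits a distance magic labeling of $M_t(C_4)$ and, combined with Corollary \ref{cncorollary}, completes the proof.
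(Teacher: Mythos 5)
Your proposal is correct and follows the paper's proof essentially verbatim: the only-if direction is delegated to Corollary \ref{cncorollary}, and the labeling you construct for $M_t(C_4)$ (with $f(u)=4t+1$ and magic constant $8t+2$) is exactly the one the paper gives. The only difference is that you spell out the bijectivity and weight verification that the paper leaves as ``easy to see,'' which is a welcome addition but not a different argument.
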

\begin{proof}
Let $n \ne 4$, then by Corollary \ref{cncorollary}, $\mu_m(C_n)$ is not distance magic. For $n = 4$, let $C_4 = x_1, x_2, x_3, x_4, x_1$ be the cycle on $4$ vertices. We define the labeling $f$ as follows:
\begin{align*}
f(x_i, j) &= 
\begin{cases}
2j + 1, \text{ if } i = 1, j = 0, 1, \dots, m\\
2j + 2, \text{ if } i = 2, j = 0, 1, \dots, m\\
4(m+1) - 2j, \text{ if } i = 3, j = 0, 1, \dots, m\\
4(m+1) - 2j -1, \text{ if } i = 4, j = 0, 1, \dots, m
\end{cases}
\end{align*}
and $f(u) = 4m + 5$. It is easy to see that $f$ is a distance magic labeling with magic constant $8m+10$. A distance magic labeling of $\mu_3(C_4)$ is given in Figure \ref{fig:my.cycle}.
\end{proof}
\begin{figure}[ht]
\centering
\begin{tikzpicture}
                \draw[fill=black] (0,0) circle (3pt);
			\draw[fill=black] (1.5,0) circle (3pt);
			\draw[fill=black] (3,0) circle (3pt);
			\draw[fill=black] (4.5,0) circle (3pt);
			
			\draw[fill=black] (0,-1.5) circle (3pt);
			\draw[fill=black] (1.5,-1.5) circle (3pt);
			\draw[fill=black] (3,-1.5) circle (3pt);
			\draw[fill=black] (4.5,-1.5) circle (3pt);
			
			\draw[fill=black] (0,-3) circle (3pt);
			\draw[fill=black] (1.5,-3) circle (3pt);
			\draw[fill=black] (3,-3) circle (3pt);
			\draw[fill=black] (4.5,-3) circle (3pt);
			
			\draw[fill=black] (2.25,-5) circle (3pt);
			
			\draw [thick] (0,0) -- (1.5,0) -- (3,0) -- (4.5,0);
			\draw[thick] (0,0) to [bend left=50] (4.5,0);
			\draw[thick] (4.5,0) to [bend right=50] (0,0);
			\draw[thick] (0,0) -- (1.5,-1.5) -- (3,0) -- (4.5,-1.5) -- (0,0);
			\draw[thick] (0,-1.5) -- (1.5,0) -- (3,-1.5) -- (4.5, 0) -- (0,-1.5);
			\draw[thick] (0,-3) -- (1.5,-1.5) -- (3,-3) -- (4.5,-1.5) -- (0,-3);
			\draw[thick] (0,-1.5) -- (1.5,-3) -- (3,-1.5) -- (4.5,-3) -- (0,-1.5);
			\draw[thick] (0,-3) -- (2.25,-5) -- (1.5,-3);
			\draw[thick] (3,-3) -- (2.25,-5) -- (4.5,-3);
			
			\node at (2.25,-5.5) {$13$};
			\node at (-0.5,-3.3) {$5$};
			\node at (1.2,-3.3) {$6$};
			\node at (2.5,-3.3) {$8$};
			\node at (5,-3.3) {$7$};
			\node at (-0.5,0) {$1$};
			\node at (-0.5,-1.5) {$3$};
			\node at (1.5,0.3) {$2$};
			\node at (3,0.3) {$12$};
			\node at (5,0) {$11$};
			\node at (5,-1.5) {$9$};
			\node at (1.2,-1.5) {$4$};
			\node at (2.6,-1.5) {$10$};
		\end{tikzpicture}
		\caption{Distance magic labeling of $\mu_2(C_4)$.}
		\label{fig:my.cycle}
\end{figure}
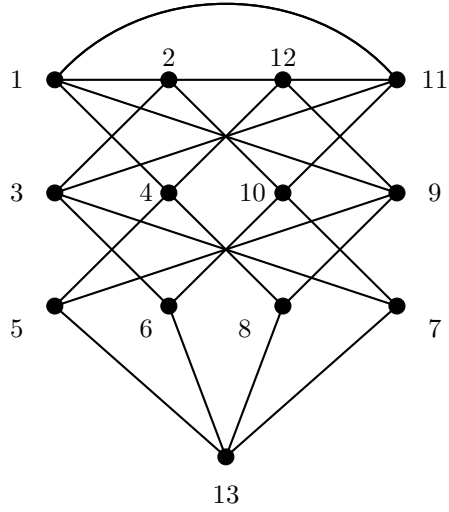
\begin{lemma} \label{mtw41}
$\mu_m(W_n)$, where $ W_n = C_n + K_1$ is not distance magic for $n (\ne 4) \ge 3$.
\end{lemma}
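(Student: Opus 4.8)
The plan is to reduce the whole statement to a single application of Lemma \ref{VJ for Mycielski}: it suffices to exhibit two vertices of $W_n$ whose open neighbourhoods in $W_n$ have symmetric difference of size exactly $2$. Since $W_n = C_n + K_1$, I would write its vertices as the cycle $x_1, x_2, \dots, x_n$ (taken anticlockwise) together with the hub $h$, where $h$ is adjacent to every $x_i$ and $N_{W_n}(x_i) = \{x_{i-1}, x_{i+1}, h\}$ (indices read modulo $n$). The key observation is that the hub $h$ lies in the neighbourhood of every cycle vertex, so for any two cycle vertices $x_i, x_j$ the common neighbour $h$ cancels and $N_{W_n}(x_i) \triangle N_{W_n}(x_j) = N_{C_n}(x_i) \triangle N_{C_n}(x_j)$. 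This lets me import the cycle computation already used in Corollary \ref{cncorollary}.

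Concretely, I would take the two cycle vertices $x_1$ and $x_{n-1}$. Then $N_{W_n}(x_1) = \{x_n, x_2, h\}$ and $N_{W_n}(x_{n-1}) = \{x_{n-2}, x_n, h\}$; the two common neighbours $x_n$ and $h$ cancel, leaving $N_{W_n}(x_1) \triangle N_{W_n}(x_{n-1}) = \{x_2, x_{n-2}\}$. The one thing that must be checked carefully is that this set genuinely has size $2$, i.e. that $x_2 \neq x_{n-2}$; this fails exactly when $2 \equiv n-2 \pmod n$, that is when $n \mid 4$, which for $n \ge 3$ forces $n = 4$. Hence for every $n \neq 4$ with $n \ge 3$ the symmetric difference has size exactly $2$, and Lemma \ref{VJ for Mycielski} immediately yields that $M_t(W_n)$ is not distance magic.

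The only real obstacle is the low-index bookkeeping: for small $n$ (notably $n = 3$, where $W_3 \cong K_4$ and $x_{n-1} = x_2$) the indices wrap around, so I would verify directly that the chosen pair still produces a symmetric difference of size exactly $2$ rather than $0$ or $4$, and that no extra coincidence among $\{x_2, x_{n-2}, x_n\}$ collapses the count. Once these boundary cases are confirmed, the argument is uniform in $t$, since the reduction in Lemma \ref{VJ for Mycielski} does not depend on $t$.
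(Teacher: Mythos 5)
Your proposal is correct and takes essentially the same route as the paper: both reduce the claim to Lemma \ref{VJ for Mycielski} by exhibiting two rim vertices of $W_n$ whose neighbourhoods have symmetric difference of size $2$. The paper uses the pair $x_1, x_3$ for $n \ge 5$ and dispatches $n=3$ separately via Corollary \ref{kn}, whereas your pair $x_1, x_{n-1}$ (with the wrap-around check) handles all $n \ne 4$ uniformly --- a minor tidiness gain, not a different argument.
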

\begin{proof}
Let $\{x_1,\ x_2, \dots,x_n\}$ be the set of vertices lying on the rim of $W_n = C_n + K_1$ and $c_1$ be the central vertex, where $n \ge 3$. If $n = 3$ then $W_3 \cong K_4$ and by Corollary \ref{kn}, $\mu_m(W_3)$ is not distance magic. Next, we prove the case $n \ge 5$. On the contrary, suppose that Mycielskian of wheel $W_n = C_n + K_1$ is distance magic with distance magic labeling $f$. Then, $N_{G}(x_1) = \{x_2,\ x_n,\ c_1\}$ and $N_{G}(x_3) = \{ x_2,\ x_4,\ c_1\}$, where the subscripts are taken to be modulo $n$. Then, $|N_{G}(x_1) \triangle N_{G}(x_3)| =|\{x_4,\ x_n\}| = 2$. Therefore, by Lemma \ref{VJ for Mycielski}, generalised Mycielskian of wheel $W_n = C_n + K_1$ for $n (\ne 4) \ge 3$ is not distance magic.
\end{proof}
The fractional total domination number of $W_4$ is known to be $\gamma_{ft}(W_4) = \frac{3}{2}$. Referring to Theorem \ref{dom_num}, we can determine that
\begin{equation*}
\gamma_{ft}(\mu_m(W_4)) = 
\begin{cases}
\frac{9m + 16}{12} & \text{if } m \text{ is even}\\
\frac{9m + 17}{12} & \text{if } m \text{ is odd}
\end{cases}
\end{equation*}
Furthermore, according to Theorem \ref{unique_k}, if $\mu_m(W_4)$ is a distance magic graph, its magic constant $k$ is given by
\begin{equation*}
k = 
\begin{cases}
\frac{6(5m + 6) (5m + 7)}{9m + 16} & \text{if } m \text{ is even}\\
\frac{6(5m + 6) (5m + 7)}{9m + 17} & \text{if } m \text{ is odd}
\end{cases}
\end{equation*}
It should be noted that $k$ may not always be an integer. Specifically, for $m = 2$ and $m = 4$, we obtain integer values $k = 48$ and $k = 81$ respectively, but none of the $\mu_2(W_{4})$ or $\mu_{4}(W_4)$ is distance magic as discussed below.
\begin{lemma} \label{mtw42}
The generalized Mycielskian of wheel $\mu_m(W_4)$ is not distance magic.
\end{lemma}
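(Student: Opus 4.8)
The plan is to argue by contradiction, and the first thing to notice is \emph{why} the wheel $W_4$ needs separate treatment from Lemma~\ref{mtw41}. Writing $x_1,x_2,x_3,x_4$ for the rim and $c_1$ for the hub of $W_4$, the opposite rim vertices are twins: $N_{W_4}(x_1)=N_{W_4}(x_3)=\{x_2,x_4,c_1\}$ and $N_{W_4}(x_2)=N_{W_4}(x_4)=\{x_1,x_3,c_1\}$. Hence all the relevant symmetric differences equal $0$ rather than $2$, so neither Theorem~\ref{VJ} nor Lemma~\ref{VJ for Mycielski} applies, and a genuinely different argument is required. So suppose $f$ is a distance magic labeling of $M_t(W_4)$ with magic constant $k$. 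For a layer $j$ I abbreviate the layer-$j$ rim total by $U_j=\sum_{i=1}^{4} f(x_i,j)$ and the single hub label by $b_j=f(c_1,j)$.

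First I would extract linear relations among the $U_j$ from the hub copies. Since $t\ge 3$, both $(c_1,0)$ and $(c_1,1)$ exist with $N_{M_t(W_4)}(c_1,0)=\{(x_i,0),(x_i,1):1\le i\le 4\}$ and $N_{M_t(W_4)}(c_1,1)=\{(x_i,0),(x_i,2):1\le i\le 4\}$. Reading off their weights gives $w(c_1,0)=U_0+U_1$ and $w(c_1,1)=U_0+U_2$; as both equal $k$ this yields $U_0+U_1=U_0+U_2=k$.

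Next I would add the weights of one rim copy from each twin class on layers $0$ and $1$. The neighbours of $(x_1,0)$ and $(x_2,0)$ together run over all four rim vertices on layers $0$ and $1$ (each exactly once) and over the hub $c_1$ twice, so $w(x_1,0)+w(x_2,0)=U_0+U_1+2(b_0+b_1)$; since the left side is $2k$ and $U_0+U_1=k$, this forces $b_0+b_1=\tfrac{k}{2}$. The identical computation on layer $1$, where the neighbours of $(x_1,1)$ and $(x_2,1)$ sit on layers $0$ and $2$, gives $w(x_1,1)+w(x_2,1)=U_0+U_2+2(b_0+b_2)=2k$, whence $b_0+b_2=\tfrac{k}{2}$.

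Subtracting the two relations gives $b_1=b_2$, that is $f(c_1,1)=f(c_1,2)$, which is impossible because $f$ is a bijection and $(c_1,1)\ne(c_1,2)$; this contradiction proves the lemma. The step needing the most care — and the main obstacle — is the rim-weight bookkeeping: one must check that summing over one representative of each twin class repackages the rim contributions exactly into the layer totals $U_{j-1}$ and $U_{j+1}$, so that after substituting the hub relations $U_0+U_1=U_0+U_2=k$ the only surviving unknowns are the two hub labels $b_1,b_2$, which are then forced to collide. This is also exactly where the hypothesis $t\ge 3$ is used, since it guarantees that layer $2$ exists and that $(c_1,1)$ is an interior hub copy with the neighbourhood used above.
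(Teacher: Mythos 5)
Your proof is correct, and it takes a genuinely different (and more uniform) route than the paper's. The paper splits into two cases: for $t=3$ it introduces $\alpha_i=f(x_1,i)+f(x_3,i)$ and $\beta_i=f(x_2,i)+f(x_4,i)$, equates several pairs of vertex weights involving the apex $u$, and derives $f(u)=f(c,2)$; for $t\ge 4$ it equates $w(x_1,0)=w(x_1,2)$, $w(x_2,0)=w(x_2,2)$ and $w(c,0)=w(c,2)$ to force $f(c,0)=f(c,3)$ --- an argument that needs layer $3$ to exist, hence the case split. You instead combine the two hub-copy weight equations $w(c_1,0)=U_0+U_1=k$ and $w(c_1,1)=U_0+U_2=k$ with the sums $w(x_1,j)+w(x_2,j)=U_{j-1}+U_{j+1}+2(b_{j-1}+b_{j+1})=2k$ for $j=0,1$ (with the convention that layer $-1$ is layer $0$ here), which collapses to $b_0+b_1=b_0+b_2=k/2$ and hence $f(c_1,1)=f(c_1,2)$, contradicting injectivity. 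All the neighbourhood formulas you use ($N(c_1,1)$, $N(x_i,1)$, existence of layer $2$) require only $t\ge 3$, so your single argument covers every case the lemma asserts; your bookkeeping of the twin-class sums ($N_{W_4}(x_1)\cup N_{W_4}(x_2)$ covering each rim vertex once and the hub twice) is exactly right. The only thing your proof does not exploit, and does not need, is the vertex $u$; the net effect is a shorter proof with no case analysis, at the cost of the slightly less obvious step of summing weights over one representative of each twin class rather than equating weights of individual vertices.
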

\begin{proof}
On the contrary, suppose that $\mu_m(W_4)$ is distance magic and has distance magic labeling $f$. Let $x_{1}x_{2}x_{3}x_{4}x_{1}$ forms a cycle lying on the rim of $W_4$ and $c_1$ be the central vertex. Then the neighborhood of the vertices is as follows:
\begin{align*}
& N(x_1,0) = N(x_3,0) = \{(x_2,0), (x_4,0), (x_2,1), (x_4,1), (c,0), (c,1)\}\\
& N(x_2,0) = N(x_4,0) = \{(x_1,0), (x_3,0), (x_1,1), (x_3,1), (c,0), (c,1)\}\\
& N(x_1, i) = N(x_3,i) = \{(x_2,i-1), (x_4,i-1), (x_2,i+1), (x_4,i+1), (c,i-1), (c,i+1)\}, \mbox{where}\;  1 \le i \le m-1\\
& N(x_2, i) = N(x_4,i) = \{(x_1,i-1), (x_3,i-1), (x_1,i+1), (x_3,i+1), (c,i-1), (c,i+1)\}, \mbox{where} \;  1 \le i \le m-1\\
& N(x_1,m) = N(x_3,m) = \{(x_2,m-1), (x_4,m-1), (c,m-1), u\}\\
& N(x_2,m) = N(x_4,m) = \{(x_1,m-1), (x_3,m-1), (c,m-1), u\}\\
& N(u) = \{(x_1,m), (x_2,m), (x_3,m), (x_4,m), (c,m)\}\\
& N(c,0) = \{(x_1,0), (x_2,0), (x_3,0), (x_4,0), (x_1,1), (x_2,1), (x_3,1), (x_4,1)\}\\
& N(c,i) = \{(x_1,i-1), (x_2,i-1), (x_3,i-1), (x_4,i-1), (x_1,i+1), (x_2,i+1), (x_3,i+1), (x_4,i+1)\}, \mbox{where}\;  1 \le i \le m-1\\
& N(c,m) = \{(x_1,m-1), (x_2,m-1), (x_3,m-1), (x_4,m-1), u\}.
\end{align*}
		
We first prove the result for $m = 2$. The generalised Mycielskian of $W_4$ with $m = 2$ is shown in Figure \ref{fig:my.wheel}. We assume that,
\begin{align*}
& \alpha_i = f(x_1,i) + f(x_3,i) \text{ for } i = 0,1,2\\
& \beta_i = f(x_2,i) + f(x_4,i) \text{ for } i = 0,1,2.  
\end{align*}
		
With the above notations, we express the weights of vertices in $\mu_m(W_4)$ under $f$ as follows:  
		\begin{align}
			& w(x_1,0) = \beta_0 + \beta_1 + f(c,0) + f(c,1) \label{eq:x10}\\
			& w(x_2,0) = \alpha_0 + \alpha_1 + f(c,0) + f(c,1) \label{eq:x20}\\
			& w(x_1,1) = \beta_0 + \beta_2 +  f(c,0) + f(c,2) \label{eq:x11}\\
			& w(x_2,1) = \alpha_0 + \alpha_2 + f(c,0) + f(c,2) \label{eq:x21}\\
			& w(x_1,2) = \beta_1 + f(c,1) + f(u) \label{eq:x12}\\
			& w(x_2,2) = \alpha_1 + f(c,1) + f(u) \label{eq:x22}\\
			& w(c,0) = \alpha_0 + \beta_0 + \alpha_1 + \beta_1 \label{eq:c0}\\
			& w(c,1) = \alpha_0 + \beta_0 + \alpha_2 + \beta_2 \label{eq:c1}\\
			& w(c,2) = \alpha_1 + \beta_1 + f(u) \label{eq:c2}\\
			& w(u) = \alpha_2 + \beta_2 + f(c,2). \label{eq:u}
		\end{align}
Now we equate: Equation (\ref{eq:x10}) with Equation (\ref{eq:x12}), Equation (\ref{eq:x20}) with Equation (\ref{eq:x22}), Equation (\ref{eq:x11}) with Equation (\ref{eq:u}), Equation (\ref{eq:x21}) with Equation (\ref{eq:u}), and Equation (\ref{eq:c0}) with Equation (\ref{eq:c1}) respectively, to obtain, 
\begin{align}
			& f(u) = \beta_0 + f(c,0) \label{eq:third2}\\
			& f(u) = \alpha_0 + f(c,0) \label{eq:fourth2}\\
			& \alpha_2 = \beta_0 + f(c,0) \label{eq:second2}\\
			& \beta_2 = \alpha_0 + f(c,0) \label{eq:fifth2}\\
			& \alpha_1 + \beta_1 = \alpha_2 + \beta_2 \label{eq:first2}.
\end{align}
Again from Equations (\ref{eq:fourth2}), (\ref{eq:third2}), (\ref{eq:second2}), (\ref{eq:fifth2}) we obtain $f(u) = \alpha_2 = \beta_2$. Using this value of $\alpha_2$ and $\beta_2$ in Equation (\ref{eq:first2}) we get $\alpha_1 + \beta_1 = 2f(u)$. Therefore Equation (\ref{eq:c2}) becomes $w(c,2) = 3f(u)$. Since $f$ is assumed to be distance magic labeling, we must have $w(u) = 3f(u)$. From Equation (\ref{eq:u}) we have $3f(u) = 2f(u) + f(c,2) \implies f(u) = f(c,2)$ which is a contradiction. Hence $\mu_m(W_4)$ is not distance magic for $m = 2$.
Now, we suppose that $m \ge 3$. By equating the weights: $w(x_1, 0) = w(x_1,2),\ w(x_2,0) = w(x_2,2)$ and $w(c,0) = w(c,2)$, respectively, we get 
\begin{align}
			&f(x_2,0) + f(x_4,0) + f(c,0) = f(x_2,3) + f(x_4,3) + f(c,3) \label{eq:w41}\\
			&f(x_1,0) + f(x_3,0) + f(c,0) = f(x_1,3) + f(x_3,3) + f(c,3) \label{eq:w42}\\
			&f(x_1,0) + f(x_2,0) + f(x_3,0) + f(x_4,0) = f(x_1,3) + f(x_2,3) + f(x_3,3) + f(x_4,3). \label{eq:w43}
		\end{align}
		Adding Equations (\ref{eq:w41}) and (\ref{eq:w42}) we get
		\begin{align*}
			& f(x_1,0) + f(x_2,0) + f(x_3,0) + f(x_4,0) + 2f(c,0) = f(x_1,3) + f(x_2,3) + f(x_3,3) + f(x_4,3) + 2f(c,3)\\
			& \implies 2f(c,0) = 2f(c,3) \text{ \dots using Equation (\ref{eq:w43})}\\
			& \implies f(c,0) = f(c,3).
\end{align*}
Which is a contradiction. This completes the proof.
\end{proof}
\begin{figure}[ht]
		\centering
		\begin{tikzpicture}
			\draw[fill=black] (0,0) circle (3pt);
			\draw[fill=black] (1.5,0) circle (3pt);
			\draw[fill=black] (3,0) circle (3pt);
			\draw[fill=black] (4.5,0) circle (3pt);
			\draw[fill=black] (6.5,0) circle (3pt);
			
			\draw[fill=black] (0,-1.5) circle (3pt);
			\draw[fill=black] (1.5,-1.5) circle (3pt);
			\draw[fill=black] (3,-1.5) circle (3pt);
			\draw[fill=black] (4.5,-1.5) circle (3pt);
			\draw[fill=black] (6.5,-1.5) circle (3pt);
			
			\draw[fill=black] (0,-3) circle (3pt);
			\draw[fill=black] (1.5,-3) circle (3pt);
			\draw[fill=black] (3,-3) circle (3pt);
			\draw[fill=black] (4.5,-3) circle (3pt);
			\draw[fill=black] (6.5,-3) circle (3pt);
			
			\draw[fill=black] (3.25,-5) circle (3pt);
			
			\draw [thick] (0,0) -- (1.5,0) -- (3,0) -- (4.5,0);
			\draw[thick] (0,0) to [bend left=50] (4.5,0);
			\draw[thick] (4.5,0) to [bend right=50] (0,0);
			
			\draw[thick] (0,0) -- (1.5,-1.5) -- (3,0) -- (4.5,-1.5) -- (0,0);
			\draw[thick] (0,-1.5) -- (1.5,0) -- (3,-1.5) -- (4.5, 0) -- (0,-1.5);
			\draw[thick] (0,-3) -- (1.5,-1.5) -- (3,-3) -- (4.5,-1.5) -- (0,-3);
			\draw[thick] (0,-1.5) -- (1.5,-3) -- (3,-1.5) -- (4.5,-3) -- (0,-1.5);
			
			\draw[thick] (0,0) to [bend left=100] (6.5,0);
			\draw[thick] (6.5,0) to [bend right=100] (0,0);
			\draw[thick] (1.5,0) to [bend left=90] (6.5,0);
			\draw[thick] (6.5,0) to [bend right=90] (1.5,0);
			\draw[thick] (3,0) to [bend left=80] (6.5,0);
			\draw[thick] (6.5,0) to [bend right=80] (3,0);
			\draw[thick] (4.5,0) -- (6.5,0);
			
			\draw[thick] (0,0) -- (6.5,-1.5) -- (1.5,0);
			\draw[thick] (3,0) -- (6.5,-1.5) -- (4.5,0);
			
			\draw[thick] (0,-1.5) -- (6.5,0) -- (1.5,-1.5);
			\draw[thick] (3,-1.5) -- (6.5,0) -- (4.5,-1.5);
			
			\draw[thick] (0,-1.5) -- (6.5,-3) -- (1.5,-1.5);
			\draw[thick] (3,-1.5) -- (6.5,-3) -- (4.5,-1.5);
			
			\draw[thick] (0,-3) -- (6.5,-1.5) -- (1.5,-3);
			\draw[thick] (3,-3) -- (6.5,-1.5) -- (4.5,-3);
			
			\draw[thick] (0,-3) -- (3.25,-5) -- (1.5,-3);
			\draw[thick] (3,-3) -- (3.25,-5) -- (4.5,-3);
			\draw[thick] (6.5,-3) -- (3.25,-5);
			
			\node at (3.25,-5.3) {$u$};
			\node at (-0.5,-3.3) {$(x_1,2)$};
			\node at (1.2,-3.3) {$(x_2,2)$};
			\node at (2.5,-3.3) {$(x_3,2)$};
			\node at (5,-3.3) {$(x_4,2)$};
			\node at (7,-3.3) {$(c,2)$};
			
			\node at (-0.7,0) {$(x_1,0)$};
			\node at (-0.7,-1.5) {$(x_1,1)$};
			\node at (7,-1.5) {$(c,1)$};
			\node at (7,0) {$(c,0)$};
			\node at (1,0.2) {$(x_2,0)$};
			\node at (2.5,0.2) {$(x_3,0)$};
			\node at (5.3,0.2) {$(x_4,0)$};
			
		\end{tikzpicture}
		\caption{$\mu_2(W_4)$.}
		\label{fig:my.wheel}
\end{figure}
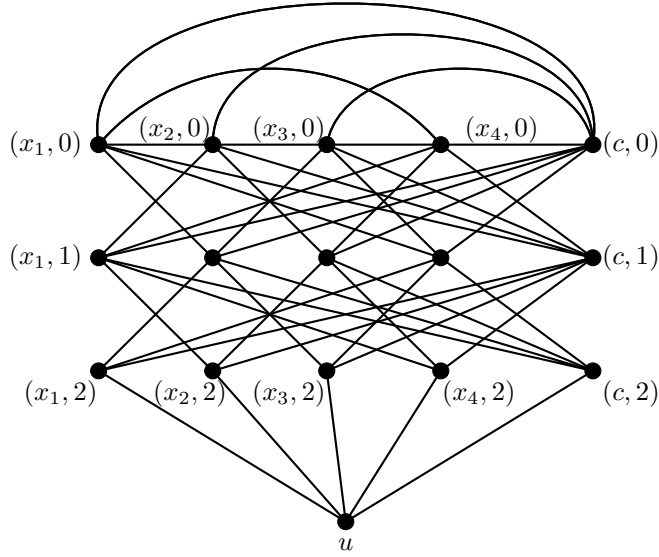
From the Lemma \ref{mtw41} and the Lemma \ref{mtw42}, the following result is evident.
\begin{theorem}
$\mu_m(W_n)$ is not distance magic for $n \ge 3$.
\end{theorem}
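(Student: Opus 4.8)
The plan is to deduce the theorem as an immediate consequence of the two preceding lemmas by a simple exhaustive case split on $n$. For every integer $n \ge 3$ exactly one of two mutually exclusive possibilities holds: either $n \ne 4$, or $n = 4$. Since these two cases together exhaust all admissible values, it suffices to cite a non-existence result for each.

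For the case $n \ne 4$ with $n \ge 3$, I would invoke Lemma \ref{mtw41}, which asserts precisely that $M_t(W_n)$ is not distance magic under exactly these hypotheses; recall that there the obstruction came from exhibiting two rim vertices $x_1$ and $x_3$ with $|N(x_1) \triangle N(x_3)| = 2$ and applying Lemma \ref{VJ for Mycielski}. For the remaining value $n = 4$, I would invoke Lemma \ref{mtw42}, where non-existence was forced not by a symmetric-difference argument but by equating several vertex weights and arriving at the contradiction $f(c,0) = f(c,t-1)$ for $t \ge 4$ (respectively $f(u) = f(c,2)$ for $t = 3$). Combining the two lemmas then yields that $M_t(W_n)$ is not distance magic for all $n \ge 3$.

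There is essentially no further obstacle at the level of this theorem: the only thing to verify is that the hypotheses of Lemmas \ref{mtw41} and \ref{mtw42} jointly cover the whole range $n \ge 3$, which is immediate. It is worth emphasising, however, where the genuine work lies. The case $n \ne 4$ is handled cheaply because a pair of vertices with small symmetric difference is available. By contrast, for $n = 4$ the wheel $W_4$ is sufficiently symmetric that every pair of distinct vertices has symmetric difference $0$ or $4$ after Mycielskianisation, so Theorem \ref{VJ} is unavailable; this is exactly why $n = 4$ required the separate, weight-counting treatment of Lemma \ref{mtw42}. The present theorem merely assembles these two pieces, so I expect the proof to be a single short paragraph rather than a calculation.
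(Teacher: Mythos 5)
Your proposal is correct and is exactly the paper's argument: the theorem is obtained by combining Lemma \ref{mtw41} (covering $n \ge 3$, $n \ne 4$) with Lemma \ref{mtw42} (covering $n = 4$), and the paper likewise presents it as an immediate consequence of those two lemmas. (Two incidental slips in your commentary, neither affecting the proof: the contradiction in Lemma \ref{mtw42} for $t \ge 4$ is $f(c,0)=f(c,3)$, and in $W_4$ the pair $x_1$, $c$ already has $|N(x_1)\triangle N(c)|=3$, so the symmetric differences are not only $0$ or $4$.)
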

Let us calculate the fractional total domination number of generalised Mycielskian of complete bipartite graphs. We know that $\gamma_{ft}(K_{a,b}) = 2$. Therefore by Theorem \ref{dom_num}, $\gamma_{ft}(\mu_{m}(K_{a,b})) = \frac{2m+3}{2}$ for all $m \ge 1$. If $\mu_{m}(K_{a,b})$ is distance magic then by Theorem \ref{unique_k}, its magic constant is given by $k = \frac{(m(a+b) + a+b+1)(m(a+b) + a+b+2)}{2m+3}$. When $a=b=1$, then $k = 2m+4$ which is an integer but $K_{1,1} \cong K_2$ and by Lemma \ref{deg1}, $\mu_m(K_2)$ is not distance magic. For the case where $a = b = 2$ and $m \ge 1$, the value of $k$ evaluates to an integer $8m + 10$. However, for other values of $a, b$, and $m$, the calculated value of $k$ may not necessarily be an integer and we fail to conclude anything about its distance magic property. In the following theorem, we give a simple proof for the distance magic labeling of complete bipartite graphs.\\
\smallskip
\begin{theorem} \label{bipartite}
$\mu_m(K_{a,b})$ is distance magic if and only if $a = b = 2$.
\end{theorem}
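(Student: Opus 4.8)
The plan is to prove the equivalence by splitting on the sizes of the two colour classes $A=\{a_1,\dots,a_m\}$ and $B=\{b_1,\dots,b_n\}$ of $K_{m,n}$. Sufficiency is immediate: $K_{2,2}\cong C_4$, hence $M_t(K_{2,2})\cong M_t(C_4)$ is distance magic by Theorem~\ref{cn}. For necessity I would first clear away the stars: if $\min\{m,n\}=1$ then $K_{m,n}$ has a vertex of degree $1$, so $M_t(K_{m,n})$ is not distance magic by Lemma~\ref{deg1}. This reduces matters to $m,n\ge 2$ with $(m,n)\neq(2,2)$, equivalently $m+n\ge 5$, which I would handle by contradiction.

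So suppose $f$ is a distance magic labelling of $M_t(K_{m,n})$ with magic constant $k$. The crucial structural fact is that every vertex $a_r$ of $G$ has the same neighbourhood $B$, so for each fixed level $i$ the vertices $(a_1,i),\dots,(a_m,i)$ have identical neighbourhoods in $M_t(K_{m,n})$, and likewise on the $B$-side. Consequently the weight of any $(a_r,i)$ depends only on the level sums $S_B^i:=\sum_{y\in B}f(y,i)$, and symmetrically the weight of any $(b_r,i)$ depends only on $S_A^i:=\sum_{x\in A}f(x,i)$. Writing out $w(a_r,0)=S_B^0+S_B^1$, $w(a_r,i)=S_B^{i-1}+S_B^{i+1}$ for $1\le i\le t-2$, $w(a_r,t-1)=S_B^{t-2}+f(u)$, the mirror equations for the $b$'s, and $w(u)=S_A^{t-1}+S_B^{t-1}$, and setting all of these equal to $k$, yields the recurrence $S_B^{i+1}=k-S_B^{i-1}$ together with $S_B^0+S_B^1=k$. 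Solving this forces $S_B^\bullet$ to take at most the two values $S_B^0$ and $k-S_B^0$ in the period-four pattern $S_B^0,\,k-S_B^0,\,k-S_B^0,\,S_B^0,\,S_B^0,\dots$, and similarly for $S_A^\bullet$.

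The heart of the argument is to combine the boundary conditions $S_A^{t-2}=S_B^{t-2}=k-f(u)$ and $S_A^{t-1}+S_B^{t-1}=k$ with this pattern and the $A\!\leftrightarrow\!B$ symmetry to deduce, uniformly in $t$, that $f(u)=k/2$ and that \emph{every} level sum equals $k/2$. I expect this to be the main obstacle, because the positions of $t-1$ and $t-2$ inside the period-four pattern depend on $t\bmod 4$; the point is that in each residue class the same conclusion $f(u)=k/2=S_A^i=S_B^i$ drops out. Once this is in hand the labels are summed: there are $t$ level sums on each side plus the apex, so $\tfrac{N(N+1)}{2}=(2t+1)\tfrac{k}{2}$ with $N=t(m+n)+1$, whence $f(u)=\tfrac{k}{2}=\tfrac{N(N+1)}{2(2t+1)}$. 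Finally $m+n\ge 5$ gives $N=t(m+n)+1>4t+1$, so $f(u)>N$, contradicting $f(u)\in\{1,\dots,N\}$. As a sanity check, $m=n=2$ gives $N=4t+1$ and $f(u)=N$, which matches the apex label $4t+1$ in the construction of Theorem~\ref{cn}.
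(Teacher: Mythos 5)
Your proposal is correct and follows essentially the same route as the paper: reduce everything to the level sums $S_A^i$, $S_B^i$, show that the magic condition forces them all to equal $f(u)=k/2$, and then count the labels to rule out $m+n\neq 4$. The only real differences are that you explicitly solve the period-four recurrence (with the case analysis on $t\bmod 4$) to justify the identity that the paper simply asserts as its equation $\alpha_i=\beta_i=f(u)$, and that you close with the contradiction $f(u)>t(m+n)+1$ instead of solving for $m+n=4$ exactly --- both minor variations on the same argument.
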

\begin{proof}
Let $G \cong K_{a,b}$, where $a$ and $b$ both are at least $2$. Otherwise, $G$ will be a star, and by Lemma \ref{deg1}, it is not distance magic. Let $V_1 = \{ x_1, x_{2}, \dots, x_{a} \} $ and $ V_2 = \{ y_{1}, y_{2}, \dots, y_{b} \} $ be the partition of vertex set of $G$. Suppose that $\mu_m(G)$ is distance magic with distance magic labeling $f$. We assume that
\begin{equation*}
\sum_{i=1}^a f(x_i,j) = \alpha_{j} \text{ for } j=0,1,2, \dots, m \text{ and }
\sum_{i=1}^b f(y_i,j) = \beta_{j} \text{ for } j=0,1,2, \dots, m.
\end{equation*}
Therefore, the vertex weights are as follows:
\begin{align*}
			& w(x_{i},0) = \beta_{0} + \beta_{1}, \text{ for } i = 1,2, \dots, a\\ 
			& w(x_{i}, j) = \beta_{j-1} + \beta_{j+1}, \text{ for } i = 1,2, \dots, a; j=1,2, \dots m-1\\
			& w(x_{i}, m) = \beta_{m-1} + f(u), \text{ for } i = 1,2, \dots, a\\
			& w(y_{i},0) = \alpha_{0} + \alpha_{1}, \text{ for } i = 1,2, \dots, b\\ 
			& w(y_{i}, j) = \alpha_{j-1} + \alpha_{j+1}, \text{ for } i = 1,2, \dots, b; \; j=1,2, \dots m-1\\
			& w(y_{i}, m) = \alpha_{m-1} + f(u), \text{ for } i = 1,2, \dots, b.
\end{align*}
Since $\mu_m(G)$ is distance magic, the vertex weights are the same under $f$. Equating the weights we get, 
\begin{eqnarray}\label{eq:bipartite1}
\alpha_{i} = \beta_{i} = f(u).
\end{eqnarray}
The vertex $u$ must receive the largest label i.e. $f(u) = (a+b)(m+1)+1$. Otherwise, if one of the vertex $(x_i,j)$ or $(y_i,j)$ receives the label $(a+b)(m+1)+1$ for some $i$ and $j$ then one of the equality 
\begin{equation*}
\alpha_i = f(u),\ \beta_i = f(u)
\end{equation*} is not possible.
Therefore, from equation (\ref{eq:bipartite1}) we have 
\begin{equation}\label{eq:bipartite2}
\sum_{i=0}^{m} \left(\alpha_i + \beta_i \right) = 2(m+1) f(u).
\end{equation} 
Since, $f(u) = (a+b)(m+1)+1$. Therefore, $\sum_{i=0}^{m} \left(\alpha_i + \beta_i \right)$ is the sum of the first $(a+b)(m+1)$ natural numbers. Therefore, equation (\ref{eq:bipartite2}) becomes   
\begin{equation}\nonumber
\frac{(a+b)(m+1)[(a+b)(m+1)+1]}{2} = 2(m+1) [(a+b)(m+1)+1].
\end{equation}
This implies $a+b = 4$.
Since, $a$ and $b$ both are at least $2$, we must have $a = b = 2$.
Conversely, suppose that $a=b=2$. In this case $K_{2,2} \cong C_4$ and by Theorem \ref{cn}, $\mu_m(C_4)$ is distance magic. This completes the proof.
\end{proof}

Let $\mu_m(G)$ be distance magic generalised Mycielskian graph with distance magic labeling $f$ and magic constant $k$. For any vertex $x \in V(G)$, the sum of labels $\sum_{y \in N(x)} f(y,j)$ alternates in $f(u)$ and $k - f(u)$.

\begin{theorem} \label{th: alt}
Let $\mu_m(G)$ admits distance magic labeling $f$ with magic constant $k$. Then for a given $x \in V(G)$, $\sum_{y \in N(x)} f(y,j) = f(u)$ or $k-f(u)$ for each $j = 0,1,2, \dots, m$.
\end{theorem}
\begin{proof}
Let $\mu_m(G)$ admits distance magic labeling $f$ with magic constant $k$. Let $x \in V(G)$ be arbitrary. We consider the following system of equations:
\begin{align*}
			&w(x,0) = \sum_{y \in N(x)} f(y,0) + \sum_{y \in N(x)}f(y,1)\\
			&w(x,1) = \sum_{y \in N(x)} f(y,0) + \sum_{y \in N(x)}f(y,2)\\
			&w(x,2) = \sum_{y \in N(x)} f(y,1) + \sum_{y \in N(x)}f(y,3)\\
			&\dots\\
			&w(x,m-1) = \sum_{y \in N(x)} f(y,m-2) + \sum_{y \in N(x)}f(y,m)\\
			&w(x,m) = \sum_{y \in N(x)} f(y,m-1) + f(u).
\end{align*}
For $m = 2$, it is easy to observe that $\sum_{y \in N(x)} f(y,0) = k$ and $\sum_{y \in N(x)}f(y,1) = k - f(u)$. Also for $m = 3$, $\sum_{y \in N(x)}f(y,1) = \sum_{y \in N(x)}f(y,2) = k - f(u)$ and $\sum_{y \in N(x)}f(y,0) = k$.\\
Let $m \ge 4$. For a given $j (1 \le j \le m-3)$, we compare weights of the vertices $(x, j)$ and $(x, j+2)$ to obtain
\begin{equation} \label{eq: rel1}
\sum_{y \in N(x)}f(y, j-1) = \sum_{y \in N(x)}f(y, j+3).
\end{equation}
Consider the following cases:\\

\noindent {\bf Case-i:} If $j \equiv 0 \pmod{4}$ then using Equation \ref{eq: rel1}, we obtain the following equalities
\begin{equation} \nonumber
\sum_{y \in N(x)}f(y, 3) = \sum_{y \in N(x)}f(y, 7) = \sum_{y \in N(x)}f(y, 11) = \cdots
\end{equation}
\noindent {\bf Case-ii:} If $j \equiv 1 \pmod{4}$ then using Equation \ref{eq: rel1}, we obtain the following equalities
\begin{equation} \nonumber
\sum_{y \in N(x)}f(y, 0) = \sum_{y \in N(x)}f(y, 4) = \sum_{y \in N(x)}f(y, 8) = \cdots
\end{equation}
\noindent {\bf Case-iii:}  If $j \equiv 2 \pmod{4}$ then using Equation \ref{eq: rel1}, we obtain the following equalities
\begin{equation} \nonumber
\sum_{y \in N(x)}f(y, 1) = \sum_{y \in N(x)}f(y, 5) = \sum_{y \in N(x)}f(y, 9) = \cdots
\end{equation}
\noindent {\bf Case-iv:} If $j \equiv 0 \pmod{4}$ then using Equation \ref{eq: rel1}, we obtain the following equalities
\begin{equation} \nonumber
\sum_{y \in N(x)}f(y, 2) = \sum_{y \in N(x)}f(y, 6) = \sum_{y \in N(x)}f(y, 10) = \cdots
\end{equation}
Equating $w(x, 0) = w(x, 1)$ we obtain, $\sum_{y \in N(x)}f(y, 1) = \sum_{y \in N(x)}f(y, 2)$ and equating $w(x,0) = w(x,2)$ we obtain, $\sum_{y \in N(x)}f(y, 0) = \sum_{y \in N(x)}f(y, 3)$. Using these two equalities with the above four cases, we obtain the following equalities
\begin{align}
&\sum_{y \in N(x)}f(y, 0) = \sum_{y \in N(x)}f(y, 3) = \sum_{y \in N(x)}f(y, 4) = \sum_{y \in N(x)}f(y, 7) = \sum_{y \in N(x)}f(y, 8) = \cdots \intertext{and}
&\sum_{y \in N(x)}f(y, 1) = \sum_{y \in N(x)}f(y, 2) = \sum_{y \in N(x)}f(y, 5) = \sum_{y \in N(x)}f(y, 6) = \cdots.
\end{align}
		
Hence the $m+1$ sums $\sum_{y \in N(x)} f(y,j)$ for $j = 0,1,2 \dots, m$ can be partitioned into two sets $S_1$ and $S_2$ according to $j \equiv 0 \text{ or } 3 (\bmod\ 4)$ and $j \equiv 1 \text{ or } 2(\bmod\ 4)$, respectively giving constant sums. 
We will show that if sum of all elements in $S_1$ is $f(u)$ then sum of all elements in $S_2$ is $k-f(u)$ or vice versa.\\

Equating $w(x,m) = w(x, m-2)$, we obtain, $f(u) = \sum_{y \in N(x)}f(y, m-3)$. Observe that when $m$ is odd, $\sum_{y \in N(x)}f(y, m) = \sum_{y \in N(x)}f(y, m-3) = f(u)$. We have $k = w(x, m) = \sum_{y \in N(x)}f(y, m-1) + f(u)$ which gives $\sum_{y \in N(x)}f(y, m-1) = k - f(u)$. If $m \equiv 0(\bmod\ 4)$ then $m-1 \equiv 3(\bmod\ 4)$. Hence by obtained partitioning of sums, $\sum_{y \in N(x)}f(y, m) = \sum_{y \in N(x)}f(y, m - 1) = k - f(u)$. Similarly, when $m \equiv 2(\bmod\ 4)$ we can show that $\sum_{y \in N(x)}f(y, m) = k - f(u)$. Hence we have the following:
\begin{equation*}
\sum_{y \in N(x)} f(y,m) = 
\begin{cases}
f(u), \text{ if } m \text{ is odd}\\
k-f(u), \text{ if } m \text{ is even}.
\end{cases}
\end{equation*}
Now, for any given $m$, $\sum_{y \in N(x)}f(y, m)$ is in exactly one of the sets $S_1$ or $S_2$. This shows that sums in each of the partitions are constant. This proves the theorem.
\end{proof}
\begin{theorem} \label{th: regular}
Let $G$ be an $r$-regular graph and  $m$ be odd. If $\mu_m(G)$ is distance magic then $r < 2(m+1)$.
\end{theorem}
\begin{proof}
Let $G$ be an $r$-regular graph on $n$ vertices such that $\mu_m(G)$ admits distance magic labeling $f$. Then $deg_{\mu_m(G)}(x,i) = 2r$, for $(0 \le i \le m-1)$, $deg_{\mu_m(G)}(x,m) = r+1$ and $deg_{\mu_m(G)}(u) = n$. Now by Theorem \ref{weights}, we have
\begin{align*}\label{reg1}
& \sum_{x \in V(G)} w(x,0) = r \sum_{x \in V(G)} f(x,0) + r \sum_{x \in V(G)} f(x,1) = kn\\   
& \sum_{x \in V(G)} w(x,1) = r \sum_{x \in V(G)} f(x,0) + r \sum_{x \in V(G)} f(x,2) = kn\\
& \dots\\
& \sum_{x \in V(G)} w(x,m-1) = r \sum_{x \in V(G)} f(x,m-2) + r \sum_{x \in V(G)} f(x,m) = kn\\
& \sum_{x \in V(G)} w(x,m) = r \sum_{x \in V(G)} f(x,m-1) + nf(u) = kn.
\end{align*}
Observe that when $m$ is odd,
\begin{equation} \label{eq:regular2}
 r \sum_{x \in V(G)} f(x,m) = n f(u).
\end{equation}
If we assign the smallest labels $1,2,3,\dots,n$ to the $n$ vertices $(x,m)$ and label $u$ with the largest label i.e. $f(u) = mn+n+1$, then using this in the Equation (\ref{eq:regular2}), we get $n (mn+n+1) \geq \frac{rn(n+1)}{2}$ which on simplification yields $r < 2(m+1)$.
\end{proof}
	
\begin{proposition}\label{mreg}
Let $G$ be a graph. If $\mu_m(G)$ is a regular graph, then $\mu_m(G)$ is not distance magic.
\end{proposition}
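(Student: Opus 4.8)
The plan is to combine the structural characterization of regularity from Proposition \ref{th2} with the classification of distance magic cycles in Theorem \ref{cycle}. The key observation is that the hypothesis ``$M_t(G)$ is regular'' is extremely restrictive: by Proposition \ref{th2} it forces $G \cong K_2$, so there is essentially only one graph to analyze. Thus the first step is simply to invoke Proposition \ref{th2} to conclude that $G \cong K_2$, reducing the general statement to a single concrete instance.

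Next I would identify $M_t(K_2)$ explicitly. Since $K_2$ has two vertices and $|V(M_t(K_2))| = 2t+1$, and since $M_t(K_2)$ is connected and $2$-regular (as recorded in the proof of Proposition \ref{th2}), it must be the cycle on $2t+1$ vertices, i.e. $M_t(K_2) \cong C_{2t+1}$. One can see this directly from the neighborhood description in Proposition \ref{th1}: each internal copy $(x,i)$ has exactly two neighbors, the apex $u$ has exactly two neighbors, and tracing adjacencies $(x,0)\sim(y,1)\sim(x,2)\sim\cdots$ up to $u$ and back closes up into a single cycle of odd length $2t+1$.

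The final step is to apply Theorem \ref{cycle}: the cycle $C_n$ is distance magic if and only if $n=4$. Because we assume $t \ge 3$ throughout, we have $2t+1 \ge 7$, and in any case $2t+1$ is odd so it can never equal $4$. Hence $C_{2t+1}$ is not distance magic, and therefore $M_t(G)$ is not distance magic, as claimed.

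I do not expect any genuine obstacle here, since the statement is a short corollary of results already established in the excerpt; the only point requiring a little care is justifying the isomorphism $M_t(K_2) \cong C_{2t+1}$ cleanly, and making sure the length $2t+1$ is correctly computed (the proof of Proposition \ref{th2} writes $C_{2r+1}$, but with $r=2$ this is indeed the odd cycle $C_{2t+1}$ once one substitutes the correct parameter). Everything else follows by direct citation.
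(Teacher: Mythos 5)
Your proposal is correct and follows essentially the same route as the paper: reduce to $G \cong K_2$ via Proposition \ref{th2}, identify $M_t(K_2)$ as an odd cycle, and conclude by Theorem \ref{cycle}. Your extra care in verifying the cycle length is $2t+1$ (rather than the paper's somewhat loosely written $C_{2r+1}$) and in checking $2t+1 \neq 4$ only makes the argument cleaner.
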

\begin{proof}
Let $G$ be a graph on $n$ vertices such that $\mu_m(G)$ is $r$ regular. Then by Theorem \ref{th2}, $G \cong K_2$. But $\mu_m(K_2) \cong C_{2m+3}$ and by Theorem \ref{cycle}, $C_{2m+3}$ is not distance magic. This completes the proof.
\end{proof}
	
\begin{observation} \label{ob1}
The graph $G$ and its generalised Mycielskian $\mu_m(G)$ do not share the property of being distance magic, i.e. $\mu_m(G)$ is distance magic irrespective of $G$, e.g., the path on $3$ vertices $P_3$ is distance magic \cite{dm_miller} but $\mu_m(P_3)$, $(m \ge 2)$ is not distance magic (see Corollary \ref{cordeg1}). Whereas $C_4$ is distance magic \cite{dm_miller} and $\mu_m(C_4)$, $(m \ge 2)$ is also distance magic (see Theorem \ref{cn}).
\end{observation}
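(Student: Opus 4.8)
The plan is to establish this observation purely by exhibiting witnesses, since the claim is a \emph{negative} statement: it asserts that there is no implication linking the distance magic property of $G$ to that of $M_t(G)$. I would organise the argument as two independent examples, chosen so that together they rule out any uniform relationship between the two properties.

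First I would treat the path $P_3$ on vertices $a$, $b$, $c$ with $b$ the centre. A brief check confirms $P_3$ is distance magic: assigning the centre $b$ the label $3$ and the leaves the labels $1$ and $2$ gives $w(a) = w(c) = f(b) = 3$ and $w(b) = f(a) + f(c) = 3$, so the common weight is $k = 3$. On the other hand, $P_3$ is a tree and hence $\delta(P_3) = 1$, so by Lemma \ref{deg1} (equivalently Corollary \ref{cordeg1}) its generalised Mycielskian $M_t(P_3)$ is \emph{not} distance magic. This supplies a graph that is distance magic while its Mycielskian is not, breaking the forward implication.

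Next I would treat the cycle $C_4$. By Theorem \ref{cycle}, $C_4$ is distance magic, and by Theorem \ref{cn}, $M_t(C_4)$ is also distance magic (an explicit labelling is given in the proof of that theorem). This supplies a graph that is distance magic and whose Mycielskian is likewise distance magic, showing that the failure observed in the first example is not forced and that the property can in fact be retained.

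Since every ingredient---that $P_3$ and $C_4$ are distance magic, together with the behaviour of their respective Mycielskians---has already been settled above, I expect no genuine obstacle: the observation follows immediately from Lemma \ref{deg1}, Theorem \ref{cycle}, and Theorem \ref{cn}. The only point requiring a moment's care is the precise phrasing of the conclusion, namely that the distance magic property of $G$ neither implies nor precludes that of $M_t(G)$; the two examples are selected exactly so that one breaks the implication and the other prevents a spurious converse from being read into the situation.
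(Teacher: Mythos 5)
Your proposal is correct and matches the paper's own justification of this observation: both arguments rest on exactly the same pair of witnesses, namely $P_3$ (distance magic, but $M_t(P_3)$ is not, via Corollary \ref{cordeg1}) and $C_4$ (distance magic, with $M_t(C_4)$ also distance magic by Theorem \ref{cn}). The only addition you make is an explicit verification that $P_3$ is distance magic, which the paper instead delegates to a citation; this is harmless and arguably clarifying.
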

	
\section{Conclusion and Future Scope}
The property of a graph being distance magic is not preserved under the Mycielskian construction for all graphs. Finding classes of graphs for which the property of a graph being distance magic is preserved under the repetitive application of the generalized Mycielskian construction will lead to a class of distance magic graphs with arbitrarily large chromatic numbers. The complete characterization of distance magic labeling of the generalized Mycielskian graph is an open problem.

\bibliography{refs}
\bibliographystyle{plain}

\end{document}